\newcommand{\autosyncline}[1]{}
	\providecommand{\cvsId}{}
\providecommand{\cvsId}{\\\small\texttt{Id: funcfield0.tex,v 1.57 2009-01-27 22:18:33 jdemeyer Exp }}
\newcommand{\Author}{Jeroen Demeyer%
\thanks{The author is a Postdoctoral Fellow of the Research Foundation -- Flanders (FWO).}}
\newcommand{\Title}{Hilbert's Tenth Problem for function fields over valued fields in characteristic zero}
\providecommand{\cvsId}{\\\small\texttt{Id: header.tex,v 1.55 2009-02-02 11:11:50 jdemeyer Exp }}
\newcommand{\twodigit}[1]{\ifthenelse{#1 < 10}{0#1}{#1}}
\providecommand{\Date}{\number\year--\twodigit{\number\month}--\twodigit{\number\day}}
\providecommand{\cvsId}{}
\author{\Author}
\title{\Title}
\date{\Date\cvsId}
	\providecommand{\theoremname}{Stelling}
	\providecommand{\maintheoremname}{Hoofdstelling}
	\providecommand{\conjname}{Conjectuur}
	\providecommand{\openname}{Open Probleem}
	\providecommand{\questionname}{Vraag}
	\providecommand{\propname}{Propositie}
	\providecommand{\obsname}{Vaststelling}
	\providecommand{\lemmaname}{Lemma}
	\providecommand{\mainlemmaname}{Hoofdlemma}
	\providecommand{\corname}{Gevolg}
	\providecommand{\claimname}{Bewering}
	\providecommand{\definename}{Definitie}
	\providecommand{\examplename}{Voorbeeld}
	\providecommand{\cexamplename}{Tegenvoorbeeld}
	\providecommand{\exercisename}{Oefening}
	\providecommand{\factname}{Feit}
	\providecommand{\factsname}{Feiten}
	\providecommand{\remarkname}{Opmerking}
	\providecommand{\solutionname}{Oplossing}
	\providecommand{\stepname}{Stap}
\providecommand{\theoremname}{Theorem}
\providecommand{\maintheoremname}{Main Theorem}
\providecommand{\conjname}{Conjecture}
\providecommand{\openname}{Open Problem}
\providecommand{\questionname}{Question}
\providecommand{\propname}{Proposition}
\providecommand{\obsname}{Observation}
\providecommand{\lemmaname}{Lemma}
\providecommand{\mainlemmaname}{Main Lemma}
\providecommand{\corname}{Corollary}
\providecommand{\claimname}{Claim}
\providecommand{\definename}{Definition}
\providecommand{\examplename}{Example}
\providecommand{\cexamplename}{Counterexample}
\providecommand{\exercisename}{Exercise}
\providecommand{\factname}{Fact}
\providecommand{\factsname}{Facts}
\providecommand{\remarkname}{Remark}
\providecommand{\solutionname}{Solution}
\providecommand{\stepname}{Step}
\theoremstyle{plain}
	\newtheorem{theorem}{\theoremname}
	\newtheorem{theorem}{\theoremname}[chapter]
\newtheorem*{theorem*}{\theoremname}
\newtheorem{mtheorem}[theorem]{\maintheoremname}
\newtheorem*{mtheorem*}{\maintheoremname}
\newtheorem*{conj*}{\conjname}
\newtheorem*{open*}{\openname}
\newtheorem{prop}[theorem]{\propname}
\newtheorem*{prop*}{\propname}
\newtheorem*{obs*}{\obsname}
\newtheorem{lemma}[theorem]{\lemmaname}
\newtheorem*{lemma*}{\lemmaname}
\newtheorem*{mlemma*}{\mainlemmaname}
\newtheorem{cor}[theorem]{\corname}
\newtheorem*{cor*}{\corname}
\theoremstyle{definition}
\newtheorem{define}[theorem]{\definename}
\newtheorem*{define*}{\definename}
\newtheorem{example}[theorem]{\examplename}
\newtheorem*{example*}{\examplename}
\newtheorem*{cexample*}{\cexamplename}
\newtheorem*{question*}{\questionname}
\newtheorem*{exercise*}{\exercisename}
\newtheorem*{fact*}{\factname}
\newtheorem*{facts*}{\factsname}
\theoremstyle{remark}
\newtheorem*{remark}{\remarkname}
\newcounter{step}
\newcounter{case}
\newcommand{\inv}{^{-1}}
\newcommand{\iinv}{^{-2}}
\newcommand{\ellzero}{\mathbf{0}}
\newcommand{\hens}{^\mathrm{H}}
\newcommand{\Gb}{\Gamma\!}
\newcommand{\eps}{\varepsilon}
\newcommand{\frakm}{\mathfrak{m}}
\newcommand{\C}{\mathbb{C}}
\newcommand{\N}{\mathbb{N}}
\newcommand{\Q}{\mathbb{Q}}
\newcommand{\R}{\mathbb{R}}
\newcommand{\Z}{\mathbb{Z}}
\newcommand{\calB}{\mathcal{B}}
\newcommand{\calE}{\mathcal{E}}
\newcommand{\calH}{\mathcal{H}}
\newcommand{\calL}{\mathcal{L}}
\newcommand{\calO}{\mathcal{O}}
\newcommand{\calR}{\mathcal{R}}
\newcommand{\calS}{\mathcal{S}}
\newcommand{\chebX}{\mathrm{X}}
\newcommand{\chebY}{\mathrm{Y}}
\newcommand{\AS}[2][void]{\ifthenelse{\equal{#1}{void}}{\mathbb{A}\!^{#2}}{\mathbb{A}\!^{#2}(#1)}}
\newcommand{\PS}[2][void]{\ifthenelse{\equal{#1}{void}}{\mathbb{P}^{#2}}{\mathbb{P}^{#2}(#1)}}
\newcommand{\FF}[1]{\mathbb{F}\!_{#1}}
\DeclareMathOperator{\fchar}{char}
\DeclareMathOperator{\End}{End}
\DeclareMathOperator{\gal}{Gal}
\DeclareMathOperator{\cdim}{cd}
\newcommand{\set}[2]{\{{#1}\mid{#2}\}}
\newcommand{\form}[1]{\langle{#1}\rangle}
\newcommand{\into}{\hookrightarrow}
\newcommand{\onto}{\twoheadrightarrow}
\newcommand{\simto}{\stackrel{\sim}{\to}}
\begin{document}

\maketitle

\begin{abstract}
\autosyncline{20\\funcfield0}Let $K$ be a field with a valuation satisfying the following conditions:
both $K$ and the residue field $k$ have characteristic zero;
the value group is not $2$-divisible;
there exists a maximal subfield $F$ in the valuation ring
such that $\gal(\bar{F}/F)$ and $\gal(\bar{k}/k)$
\autosyncline{25}have the same $2$-cohomological dimension and this dimension is finite.
Then Hilbert's Tenth Problem has a negative answer for any function field of a variety over $K$.
In particular, this result proves undecidability for varieties over $\C((T))$.
\end{abstract}

\section{Introduction}

\autosyncline{34\\funcfield0}Hilbert's Tenth Problem (from his famous list of $23$ problems)
is the following:
find an algorithm which, given a polynomial $f \in \Z[X_1, \dots, X_n]$,
decides whether or not $f$ has a zero in $\Z^n$.
It has been shown that such an algorithm does not exist
\autosyncline{39}by Matiyasevich (see \cite{matiy-h10}), building on earlier work by Davis, Putnam and Robinson.
See \cite{davis-h10} for a survey article with the proof of Hilbert's Tenth Problem.

\autosyncline{42}Hilbert's Tenth Problem (HTP) can be generalized as follows:
let $\calR$ be a ring and $\calR_0$ a finitely generated $\Z$-algebra in $\calR$.
Then Hilbert's Tenth Problem for $\calR$ with coefficients in $\calR_0$
is the question whether there exists an algorithm
which can decide whether a polynomial $f \in \calR_0[X_1, \dots, X_n]$
\autosyncline{47}has a solution in $\calR^n$.

\autosyncline{49}The ring $\calR_0$ is called the \emph{coefficient ring}.
If $\calR$ is a field,
then the equation $f(X_1, \dots, X_n) = 0$ is equivalent to $c f(X_1, \dots, X_n) = 0$
for $c \in \calR \setminus \{0\}$.
Therefore, we might as well take coefficients in the fraction field of $\calR_0$.
\autosyncline{54}In this paper, $\calR$ will always be a field
and we will take $\calR_0$ to be a finitely generated subfield of $\calR$.

\begin{remark}
\autosyncline{58}Technically speaking we do not really need $\calR_0$ to be finitely generated,
but finitely generated rings have many nice properties.
For more general rings, we would have to be more careful
with our definition of Hilbert's Tenth Problem and diophantine models.
Furthermore, all undecidability results so far (except for $\calR = \Z$)
\autosyncline{63}work by interpreting $\Z$ in $\calR$;
this interpretation involves finitely many polynomials
and hence it suffices to adjoin these finitely many coefficients to the ring $\calR_0$.
\end{remark}

\autosyncline{68}This paper deals with Hilbert's Tenth Problem
for function fields over valued fields,
where both the valued field and the residue field have characteristic zero,
the value group is not $2$-divisible and some condition on Galois cohomology is satisfied
(see Main Theorem~\ref{main-valued2}).

\autosyncline{74}Our Main Theorem
generalizes a result by Kim and Roush (see \cite{kim-roush-ct1t2}),
who proved the negative answer to HTP for $\C(Z_1, Z_2)$ (with coefficients in $\Q(Z_1, Z_2)$).
Eisentr\"ager extended this to function fields of varietes of dimension $\geq 2$ over $\C$
(see \cite{eisentraeger-funcfield0}).

\autosyncline{80}There are already a lot of results on HTP for function fields:
Denef proved undecidability for rational function fields over real fields
(see \cite{denef-real}),
Moret-Bailly generalized this to function fields of varieties over real fields
(see \cite{mb-ell}).
\autosyncline{85}Kim and Roush proved the negative answer to HTP
for rational function fields over $p$-adic fields
(subfields of $\Q_p$, including all number fields).
This was generalized to function fields of varieties
independently by Moret-Bailly (see \cite{mb-ell}) and Eisentr\"ager (see \cite{eisentraeger-padic}).
\autosyncline{90}In positive characteristic,
Pheidas proved undecidability for $\FF{q}(Z)$ (see \cite{pheidas5}) with $q$ odd,
Videla did the same for $q$ even (see \cite{videla-ff2}).
This was generalized to function fields over finite fields
by Shlapentokh (see \cite{shlapentokh-ffff}) and Eisentr\"ager (see \cite{eisentraeger-ffff2}).
\autosyncline{95}One of the biggest open questions regarding function fields is $\C(Z)$.

\autosyncline{97}For our result, we consider function fields of curves over valued fields with residue characteristic zero.
So we cannot apply our result to $\Q_p(Z)$ for example.
One important application of our result where HTP was not known before
is the field $\C((T))(Z)$.

\textit{Acknowledgements.}
\autosyncline{103}A large part of this article was written during a stay at
the Scuola Normale Superiore di Pisa,
funded by the Research Foundation -- Flanders (FWO).
I thank the University of Ghent, the FWO and the Scuola Normale Superiore
for giving me this possibility.
\autosyncline{108}I thank Karim Becher for telling me about the Milnor conjectures.
I also thank Laurent Moret-Bailly and Angelo Vistoli
for some helpful discussions regarding algebraic geometry.

\section{Preliminaries}

\autosyncline{116\\funcfield0}Before we can state the Main Theorem (see Section~\ref{sec-valmain1} and Section~\ref{sec-valmain2}),
we need some definitions regarding diophantine sets, valuations and quadratic forms.

\subsection{Diophantine sets and diophantine models}\label{sec-dioph}

\autosyncline{123\\funcfield0}The most important definition in the study of Hilbert's Tenth Problem
is that of a diophantine set:
\begin{define}
Let $\calR_0 \subseteq \calR$ be rings.
Let $\calS$ be a subset of $\calR^n$.
\autosyncline{128}Then $\calS$ is called \emph{diophantine} over $\calR$ with coefficients in $\calR_0$
if and only if there exists a polynomial $f \in \calR_0[A_1, \dots, A_n, X_1, \dots, X_m]$
for some $m \geq 0$ such that
$$
	\calS = \set{(a_1, \dots, a_n) \in \calR^n}
		{f(a_1, \dots, a_n, x_1, \dots, x_m) = 0 \text{ for some } (x_1, \dots, x_m) \in \calR^m}
.$$
\end{define}

\autosyncline{137}Next, we need to define a diophantine model of one ring $\calS$ over a ring $\calR$.
This is a way of encoding the ring $\calS$ as elements of $\calR$
in a diophantine way.
\begin{define}
Let $\calS$ and $\calR_0 \subseteq \calR$ be rings.
\autosyncline{142}A \emph{diophantine model} of $\calS$ over $\calR$ with coefficients in $\calR_0$
is an injective map $\phi: \calS \into \calR^m$ for some $m \geq 1$
such that the following sets are diophantine with coefficients in $\calR_0$:
\begin{enumerate}
\item The image $\phi(\calS) \subseteq \calR^m$.
\item \autosyncline{147}The graph of addition $\set{(\phi(x), \phi(y), \phi(x+y))}{x,y \in \calS} \subseteq \calR^{3m}$.
\item The graph of multiplication $\set{(\phi(x), \phi(y), \phi(x y))}{x,y \in \calS} \subseteq \calR^{3m}$.
\end{enumerate}
\end{define}

\autosyncline{152}The reason for this definition is the following reduction,
which is usually applied with $\calS_0 = \calS = \Z$:

\begin{prop}\label{diophmodel}
\autosyncline{156}Let $\calS_0 \subseteq \calS$ and $\calR_0 \subseteq \calR$ be rings
such that $\calS_0$ and $\calR_0$ are finitely generated $\Z$-algebras.
Assume $\phi: \calS \into \calR^m$ is a diophantine model
such that $\phi\inv(\calR_0^m)$ contains a set of generators of $\calS_0$.
If HTP for $\calS$ with coefficients in $\calS_0$ has a negative answer,
\autosyncline{161}then also HTP for $\calR$ with coefficients in $\calR_0$ has a negative answer.
\end{prop}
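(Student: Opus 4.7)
The plan is to reduce HTP for $\calS$ with coefficients in $\calS_0$ to HTP for $\calR$ with coefficients in $\calR_0$ by showing that, given any polynomial $g \in \calS_0[Y_1,\dots,Y_n]$, one can algorithmically construct a diophantine condition over $\calR$ with coefficients in $\calR_0$ that is satisfiable if and only if $g$ has a zero in $\calS^n$. Fix polynomials witnessing that $\phi(\calS)$ and the graphs of addition and multiplication are diophantine over $\calR$ with coefficients in $\calR_0$; also fix a finite generating set $s_1,\dots,s_k$ of $\calS_0$ with $\phi(s_j)\in\calR_0^m$ for every $j$, which exists by hypothesis.

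First, replace each variable $Y_i$ by an $m$-tuple $\bar X_i=(X_{i,1},\dots,X_{i,m})$ of new variables over $\calR$, subject to the diophantine constraint $\bar X_i\in\phi(\calS)$; by injectivity of $\phi$, such an $\bar X_i$ determines a unique $y_i\in\calS$. Second, encode every coefficient $c$ of $g$: write $c$ as a polynomial expression in the generators $s_j$ and build $\phi(c)$ step by step using the graphs of $+$ and $\cdot$, starting from the tuples $\phi(s_j)\in\calR_0^m$ (which are legal constants in $\calR_0$) and introducing auxiliary $m$-tuples of $\calR$-variables for each intermediate sum or product, linked together by the graph polynomials. Third, decompose $g(y_1,\dots,y_n)$ into a sequence of binary additions and multiplications involving the $y_i$ and the encoded coefficients, and for every subterm introduce a further $m$-tuple representing its image under $\phi$, again tied to its children by the addition or multiplication graph.

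At the end of this recursion we obtain an $m$-tuple $\bar X_{\mathrm{res}}$ that, whenever all diophantine constraints are satisfied, equals $\phi(g(y_1,\dots,y_n))$. It remains to enforce $g(y_1,\dots,y_n)=0$, i.e.\ $\bar X_{\mathrm{res}}=\phi(0)$. Since $0$ is the unique element of $\calS$ with $0+0=0$ and $\phi$ is injective, the condition $\bar X_{\mathrm{res}}=\phi(0)$ is itself diophantine: require $\bar X_{\mathrm{res}}\in\phi(\calS)$ together with $(\bar X_{\mathrm{res}},\bar X_{\mathrm{res}},\bar X_{\mathrm{res}})$ lying in the graph of addition. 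Combining all these diophantine conditions into one system with coefficients in $\calR_0$ produces the desired reduction: an algorithm solving HTP for $\calR$ with coefficients in $\calR_0$ would, applied to this system, decide whether $g$ has a zero in $\calS^n$, contradicting the undecidability hypothesis.

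The argument is mostly bookkeeping; the one genuine subtlety is the translation of the coefficients of $g$ from $\calS_0$ into $\calR$, which is precisely why the hypothesis asks that $\phi\inv(\calR_0^m)$ contains a generating set of $\calS_0$. A minor technical point is that HTP as stated concerns a single polynomial equation, whereas the construction above naturally produces a system; this is handled in the standard way, since in all rings of interest here a finite system of polynomial equations over $\calR_0$ can be replaced by a single equation (or, equivalently, one can work with finite systems throughout, as diophantine sets are closed under finite conjunction).
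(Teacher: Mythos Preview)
The paper states this proposition without proof, treating it as a standard folklore fact about diophantine models; there is therefore no ``paper's own proof'' to compare against. Your argument is the expected one: pull back variables to $m$-tuples in $\phi(\calS)$, rebuild the evaluation of $g$ step by step through the diophantine graphs of $+$ and $\cdot$, and test the result against $\phi(0)$. Your treatment of the two genuine subtleties is sound. The coefficient issue---that constants of $\calS_0$ must be expressible starting from tuples in $\calR_0^m$---is exactly what the hypothesis on $\phi\inv(\calR_0^m)$ is designed for, and you use it correctly. Your characterisation of $\phi(0)$ via $x+x=x$ is a clean way to avoid assuming $\phi(0)\in\calR_0^m$.

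One small caveat worth making explicit: the paper's definitions of ``diophantine'' and of HTP use a \emph{single} polynomial, whereas your construction naturally yields a finite system. Your parenthetical remark handles this, but note that over an algebraically closed field a system cannot in general be collapsed to one equation, so the proposition in its literal generality tacitly relies on $\calR$ allowing such a reduction (e.g.\ $\calR$ an integral domain that is not algebraically closed, which covers every $\calR=K(C)$ appearing in the paper). This is a standard abuse and not a defect in your argument, but it would be worth a sentence saying so rather than ``in all rings of interest here''.
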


\subsection{Valuations}\label{sec-val}

\autosyncline{168\\funcfield0}In this section
we give definitions and properties of (Krull) valuations.
We refer to \cite{endler} and \cite{engler-prestel}.

\begin{define}
\autosyncline{173}A \emph{totally ordered $\Z$-module} $\Gamma$
is a $\Z$-module with a total order $\leq$
such that $a \leq b$ implies $a+c \leq b+c$ for all $a,b,c \in \Gamma$.
\end{define}

\autosyncline{178}Remark that totally ordered $\Z$-modules are always torsion-free.

\begin{define}
\autosyncline{181}A \emph{valuation} $v$ on a field $K$ is a surjective map $v: K^* \onto \Gamma$,
where $\Gamma$ is a totally ordered $\Z$-module,
satisfying the following conditions:
\begin{enumerate}
\item For all $x, y \in K^*$, $v(xy) = v(x) + v(y)$.
\item \autosyncline{186}For all $x, y \in K^*$ such that $x + y \neq 0$, $v(x + y) \geq \min(v(x), v(y))$.
\end{enumerate}
$\Gamma$ is called the \emph{value group} of the valuation.
Usually one defines $v(0) = \infty$, which is consistent with the above axioms
if $\infty$ is treated as an element greater than any element from $\Gamma$.
\end{define}

\autosyncline{193}Every field has a \emph{trivial valuation} with value group $\{0\}$.
Then $v(x) = 0$ for $x \in K^*$ and $v(0) = \infty$.

\autosyncline{196}If $v: K^* \onto \Gamma$ is a valuation, the \emph{valuation ring} $\calO$
is the ring consisting of all elements of $K$ having non-negative valuation:
$$
	\calO = \set{x \in K}{v(x) \geq 0}
.$$

\autosyncline{202}In $\calO$, the elements with strictly positive valuation form a \emph{maximal ideal} $\frakm$.
The field $k := \calO/\frakm$ is called the \emph{residue field} of $K$ with respect to $v$.
We have a natural surjection $\pi: \calO \onto k$.
Note that for all $x \in K$, either $x \in \calO$ or $x\inv \in \calO$.
The elements for which both hold
\autosyncline{207}form the \emph{unit group} $\calO^*$,
the set of elements with valuation equal to zero.
We have a short exact sequence $1 \to \calO^* \to K^* \stackrel{v}{\to} \Gamma \to 0$.
This shows that
the ring $\calO$ determines completely the value group and the valuation.

\begin{prop}\label{valuation-ext}
\autosyncline{214}Let $K$ be a field with a valuation $v$
such that $K$ and its residue field have characteristic zero.
Let $L$ be a finite extension of $K$
and let $v_1, \dots, v_n$ denote all the extensions of $v$ to $L$.
Let $e_i$ denote the respective ramification indices and $f_i$ the residue extension degrees.
\autosyncline{219}Then $\sum_{i=1}^n e_i f_i = [L:K]$.
\end{prop}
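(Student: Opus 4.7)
The plan is to reduce to Henselian valued fields and then apply Ostrowski's lemma on defect to promote the general fundamental inequality $\sum_i e_i f_i \leq [L:K]$ to an equality under the characteristic zero hypothesis.

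First I would introduce the Henselization $K\hens$ of $(K,v)$ and use the standard correspondence between the extensions $v_1, \dots, v_n$ of $v$ to $L$ and the factors of the finite étale $K\hens$-algebra
$L \otimes_K K\hens \cong \prod_{i=1}^n L_i\hens$,
where $L_i\hens$ denotes the Henselization of $L$ at $v_i$ (viewed as a finite extension of $K\hens$ on which the valuation extending $v$ is unique, namely the one induced by $v_i$). Since Henselization preserves both value group and residue field, each $L_i\hens/K\hens$ still has ramification index $e_i$ and residue degree $f_i$. Comparing $K$-dimensions yields $[L:K] = \sum_{i=1}^n [L_i\hens : K\hens]$, so the proposition reduces to showing $[L_i\hens : K\hens] = e_i f_i$ for each $i$.

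For this I would invoke two facts about Henselian valued fields (both in \cite{engler-prestel}). The first is the fundamental inequality for a single valuation extension, $e_i f_i \leq [L_i\hens : K\hens]$, so that the quotient $d_i := [L_i\hens : K\hens]/(e_i f_i)$ is a well-defined positive integer, the \emph{defect}. The second is Ostrowski's lemma: the defect of a finite separable extension of a Henselian valued field is a power of the residue characteristic, with the convention that this power equals $1$ when the residue characteristic is $0$. Since $k$ has characteristic zero by hypothesis, we get $d_i = 1$ for every $i$, and summing over $i$ gives $[L:K] = \sum_i e_i f_i$.

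The main obstacle is the Ostrowski defect theorem, which is where the characteristic zero hypothesis on the residue field $k$ is consumed; the decomposition of $L \otimes_K K\hens$ and the fundamental inequality are standard formal facts. The assumption that $K$ itself has characteristic zero enters only to guarantee that the finite extension $L/K$ is separable, a standing hypothesis of Ostrowski's lemma. One minor technical point worth checking is that the correspondence between extensions of $v$ and factors of $L \otimes_K K\hens$ works for Krull valuations of arbitrary rank and not only for rank-one discrete valuations, but this is handled in the references.
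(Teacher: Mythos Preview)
Your proposal is correct and is essentially the same approach as the paper's: both rest on the fact that valued fields with residue characteristic zero are defectless, so the fundamental inequality becomes an equality. The paper simply cites this as Corollary~(20.23) and the definition of defectless in \cite{endler}, whereas you unpack the argument via Henselization and Ostrowski's defect theorem.
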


\begin{proof}
\autosyncline{223}This follows from Corollary~(20.23)
and the definition of defectless at the beginning of \S18 in \cite{endler}.
\end{proof}

\begin{remark}
\autosyncline{228}In general, the equality in Proposition~\ref{valuation-ext}
is only an inequality $\sum_{i=1}^n e_i f_i \leq [L:K]$ because of
possible inseperability either in $K$ or in the residue field.
However, if the value group is $\Z$, then the equality holds anyway
if $L/K$ is seperable.
\end{remark}

\begin{define}
\autosyncline{236}With notations as above,
a valued field $K$ is called \emph{henselian}
if and only if the following property (called \emph{Hensel's Lemma}) holds:

\autosyncline{240}For every $P \in \calO[Z]$ and $\alpha \in k$
such that $\alpha$ is a simple root of $P \bmod \frakm$,
there exists a $\beta \in \pi\inv(\alpha) \subseteq \calO$ such that $P(\beta) = 0$
(the simple root $\alpha$ in the reduction can be lifted to a global root $\beta$).
\end{define}

\autosyncline{246}If $K$ is a field with valuation $v$, the \emph{henselisation} $K\hens$
is the smallest extension of $K$ which is henselian.
This always exists and is an algebraic extension of $K$
(it is usually defined as the fixed field of a certain subgroup
of $\gal(K^\text{sep}/K)$).
\autosyncline{251}Given an algebraic closure $\bar{K}$,
the henselisation $K\hens$ is a uniquely defined subfield of $\bar{K}$.
The henselisation is an immediate extension,
i.e.\ the value group $\Gamma$ and the residue field $k$ remain the same.
All this follows from \cite[Section~5.2]{engler-prestel}.

\begin{prop}\label{reshensel}
\autosyncline{258}Let $K$ be a valued field with notations as above.
If $K$ is henselian and $\fchar K = \fchar k = 0$,
then $\calO$ contains a maximal subfield $F$.
The projection $\pi$ maps $F$ isomorphically onto $k$.
\end{prop}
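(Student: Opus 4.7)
The plan is to construct $F$ by a Zorn's lemma argument and then use Hensel's lemma to force the residue map $\pi\colon F \to k$ to be surjective. Since $\fchar K = 0$, the prime field $\Q$ sits inside $\calO$, so the poset $\calF$ of all subfields of $\calO$ (ordered by inclusion) is nonempty; unions of chains in $\calF$ are again subfields contained in $\calO$, so Zorn produces a maximal element $F \in \calF$. The restriction $\pi|_F\colon F \to k$ is a ring homomorphism out of a field, with kernel $F \cap \frakm$ a proper ideal of $F$, hence zero; so $\pi|_F$ is automatically injective and it remains only to prove it is surjective.

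For surjectivity I would argue by contradiction: assume there exists $\alpha \in k \setminus \pi(F)$, and in each of the two cases (transcendental/algebraic over $\pi(F)$) build an explicit $\tilde\alpha \in \calO$ such that $F(\tilde\alpha)$ is a subfield of $\calO$ strictly larger than $F$, contradicting maximality. In the transcendental case, pick any lift $\tilde\alpha \in \pi\inv(\alpha)$; then $\tilde\alpha$ must be transcendental over $F$ (a nontrivial algebraic relation would reduce mod $\frakm$ to a relation over $\pi(F)$), and for any nonzero $g(X) \in F[X]$ the reduction $\bar g$ is nonzero, so $\bar g(\alpha)\neq 0$, which means $v(g(\tilde\alpha))=0$. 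Hence $g(\tilde\alpha)$ is a unit in $\calO$, $F(\tilde\alpha) = F[\tilde\alpha][g(\tilde\alpha)\inv] \subseteq \calO$, and we have the desired strict enlargement.

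The algebraic case is where the henselian hypothesis does the essential work. Because $\fchar k = 0$, $\alpha$ is separable over $\pi(F)$; let $\bar p \in \pi(F)[X]$ be its minimal polynomial and let $p \in F[X]$ be any monic lift. Then $\alpha$ is a simple root of $\bar p = p \bmod \frakm$, so by Hensel's Lemma there is $\tilde\alpha \in \calO$ with $p(\tilde\alpha) = 0$ and $\pi(\tilde\alpha) = \alpha$. Since $\tilde\alpha$ is algebraic over $F$, $F(\tilde\alpha) = F[\tilde\alpha]$; every element of this ring is a polynomial expression in $\tilde\alpha$ with coefficients in $F \subseteq \calO$ evaluated at $\tilde\alpha \in \calO$, hence lies in $\calO$. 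So again $F \subsetneq F(\tilde\alpha) \subseteq \calO$, contradicting maximality. The only real subtlety is this algebraic step, and its success is exactly the henselian hypothesis; the transcendental step and the Zorn setup are routine. Combining the two cases forces $\pi(F) = k$, completing the proof.
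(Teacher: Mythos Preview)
Your proof is correct and follows essentially the same approach as the paper's: Zorn's Lemma to get a maximal subfield $F$, injectivity of $\pi|_F$ for free, then the transcendental/algebraic dichotomy with Hensel's Lemma handling the algebraic case. Your write-up is in fact a bit more careful than the paper's sketch in justifying why $F(\tilde\alpha) \subseteq \calO$ in each case.
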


\begin{proof}
\autosyncline{265}We give a sketch of the proof,
see \cite[Lemma~5.4.13~(ii)]{chang-keisler} for more details.
\parpic[r]{
$$\xymatrix@R=1.5em{
	K \\
	\calO \ar@{-}[u] \ar@{->>}[r]^\pi & k \\
	F \ar@{-}[u] \ar[r]^\sim & F^\pi \ar@{-}[u] \\
	\Q \ar@{-}[u] & \Q \ar@{-}[u]
}$$
}

\autosyncline{276}Since $\fchar k = 0$, the valuation will be trivial on $\Q$,
so $\calO$ contains $\Q$.
By Zorn's Lemma, $\calO$ contains a maximal subfield $F$.

\autosyncline{280}Since $F^*$ is contained in $\calO^*$,
it follows that $v$ is trivial on $F$ and that $\pi$ embeds $F$ as a subfield of $k$.
Denote this field by $F^\pi$, we must prove that $F^\pi = k$.
Assume this is not the case and let $\alpha \in k \setminus F^\pi$.

\autosyncline{285}If $\alpha$ is transcendental over $F^\pi$,
choose $\beta \in \calO$ such that $\pi(\beta) = \alpha$.
Since $F[\beta]$ is mapped isomorphically to $F^\pi[\alpha]$,
the valuation $v$ is trivial on $F[\beta]$.
Therefore, it is also trivial on $F(\beta)$,
\autosyncline{290}hence $F(\beta) \subseteq \calO$, contradicting the maximality of $F$.

\autosyncline{292}If $\alpha$ is algebraic over $F^\pi$,
let $\overline{f}(X) \in F^\pi[X]$ be the minimal polynomial of $\alpha$.
Write $f(X)$ for the corresponding polynomial in $F[X]$, under the isomorphism $\pi$.
$\overline{f}(X)$ has a simple zero $\alpha$ in $k$,
so we can use Hensel's Lemma to construct a $\beta \in \calO$ for which $f(\beta) = 0$.
\autosyncline{297}Again, one can prove that $F(\beta) \cong F^\pi(\alpha)$ under $\pi$,
contradicting the maximality of $F$.
\end{proof}

\autosyncline{301}Note that Zorn's Lemma does not imply uniqueness,
so in general this field $F$ is not unique.
Note also that ``$F$ is contained in $\calO$'' is equivalent to
``$v$ is the trivial valuation on $F$'',
so $F$ is maximal with respect to the property that $v$ is trivial on $F$.

\autosyncline{307}In the proof of Proposition~\ref{reshensel},
we only used the hypothesis that $K$ is henselian to exclude
that $k$ is an algebraic extension of $F^\pi$.
So, for non-henselian fields, we can still say the following:
\begin{prop}\label{resnonhensel}
\autosyncline{312}Let $K$ be a valued field with notations as above.
If $\fchar K = \fchar k = 0$,
then $\calO$ contains a maximal subfield $F$.
The projection $\pi$ embeds $F$ as a subfield of $k$,
such that $k$ is algebraic over $\pi(F)$.
\end{prop}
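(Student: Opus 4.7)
The plan is to reuse essentially the argument from Proposition~\ref{reshensel}, extracting from it exactly the portion that does not rely on Hensel's Lemma. The hypothesis $\fchar K = \fchar k = 0$ will be used only to guarantee $\Q \subseteq \calO$, which in turn guarantees that the collection of subfields of $\calO$ is non-empty.

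First I would apply Zorn's Lemma to the poset of subfields of $\calO$, ordered by inclusion. This poset is non-empty (it contains $\Q$, since $v$ is necessarily trivial on $\Q$ once $\fchar k = 0$), and the union of a chain of subfields of $\calO$ is again a subfield of $\calO$. So we get a maximal subfield $F \subseteq \calO$. Since $F^* \subseteq \calO^*$, the valuation $v$ is trivial on $F$, and therefore $\pi|_F$ is injective; write $F^\pi := \pi(F) \subseteq k$.

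Next I would show $k$ is algebraic over $F^\pi$ by contradiction, copying the transcendental case of Proposition~\ref{reshensel}. Suppose $\alpha \in k$ is transcendental over $F^\pi$, and pick any lift $\beta \in \calO$ with $\pi(\beta) = \alpha$. Since $\pi$ sends $F[\beta]$ isomorphically to $F^\pi[\alpha]$, every non-zero element of $F[\beta]$ has non-zero image under $\pi$, so it is a unit in $\calO$; hence $v$ is trivial on $F[\beta]$, and therefore on the fraction field $F(\beta)$. Thus $F(\beta) \subseteq \calO$, contradicting the maximality of $F$.

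The only novelty compared with Proposition~\ref{reshensel} is what we \emph{don't} prove: we make no attempt to handle the case of $\alpha$ algebraic over $F^\pi$, precisely because lifting an algebraic element requires Hensel's Lemma, which is not available here. That is also why the conclusion is weaker: $k$ is only algebraic over $\pi(F)$, not necessarily equal to it. There is no genuine obstacle; the proof is a short extraction from the earlier argument, and the remark immediately following Proposition~\ref{reshensel} already signals that this is all one can say in the non-henselian setting.
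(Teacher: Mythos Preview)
Your proposal is correct and follows exactly the approach the paper takes: the paper simply remarks before stating Proposition~\ref{resnonhensel} that in the proof of Proposition~\ref{reshensel} the henselian hypothesis was used only to handle the algebraic case, so the remaining argument (Zorn's Lemma plus the transcendental-lift contradiction) gives precisely the weaker conclusion that $k$ is algebraic over $\pi(F)$. There is nothing to add.
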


\begin{define}
\autosyncline{321}Let $\Gamma$ be a $\Z$-module.
For a prime $p \in \N$, we say that $\Gamma$ is \emph{$p$-divisible}
if every $x \in \Gamma$ can be written as $p y$, with $y \in \Gamma$.
In other words, if $p \Gamma = \Gamma$.
We call a $\Z$-module \emph{divisible} if it is $p$-divisible for every prime $p$.
\end{define}

\begin{define}
\autosyncline{329}Let $\Gamma$ be a $\Z$-module.
An element $g \in \Gamma$ is called \emph{even} if $g \in 2\Gamma$,
otherwise $g$ is called \emph{odd}.
\end{define}

\autosyncline{334}Clearly, odd elements exist if and only if $\Gamma$ is not $2$-divisible.

\autosyncline{336}We end this section by introducing the \emph{composition} of valuations
(see \cite[Section~2.3, p.~45]{engler-prestel}).
We will only use this in the examples (Section~\ref{sec-examples}).

\begin{prop}\label{compval}
\autosyncline{341}Let $K$ be a field with a valuation $v$ and residue field $k_v$.
Assume $u$ is a valuation on $k_v$, with residue field $k_u$.
Then there exists a valuation $w$ on $K$, called the composition of $v$ with $u$,
with residue field $k_w \cong k_u$ and such that the value groups form an exact sequence
\begin{equation}\label{compval-seq}
	0 \longrightarrow \Gb_u \longrightarrow \Gb_w \longrightarrow \Gb_v \longrightarrow 0
.\end{equation}
\end{prop}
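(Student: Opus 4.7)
The plan is to construct the valuation ring of $w$ directly as a pullback, and then read off both the residue field and the value-group exact sequence from the construction. Let $\calO_v \subseteq K$ be the valuation ring of $v$ with maximal ideal $\frakm_v$ and residue map $\pi_v : \calO_v \onto k_v$; let $\calO_u \subseteq k_v$ be the valuation ring of $u$ with maximal ideal $\frakm_u$. I would define
$$
\calO_w := \pi_v\inv(\calO_u), \qquad \frakm_w := \pi_v\inv(\frakm_u).
$$
Since $\frakm_v \subseteq \frakm_w \subseteq \calO_w$ and $\calO_w/\frakm_v \cong \calO_u$, the ring $\calO_w$ is a subring of $\calO_v$ whose maximal ideal is $\frakm_w$ and whose residue field is $\calO_w/\frakm_w \cong \calO_u/\frakm_u = k_u$, giving the claim about $k_w$.

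Next I would verify that $\calO_w$ is a valuation ring of $K$, i.e.\ that for every $x \in K^*$, either $x \in \calO_w$ or $x\inv \in \calO_w$. Split into cases according to the sign of $v(x)$: if $v(x) > 0$ then $\pi_v(x) = 0 \in \calO_u$, so $x \in \calO_w$; if $v(x) < 0$ then the same argument applied to $x\inv$ gives $x\inv \in \calO_w$; and if $v(x) = 0$ then $\pi_v(x) \in k_v^*$, and the valuation-ring property of $\calO_u$ in $k_v$ places one of $x, x\inv$ in $\calO_w$. Hence $\calO_w$ defines a valuation $w$ on $K$, with value group $\Gb_w := K^*/\calO_w^*$.

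For the exact sequence I would chase the units. Since $\calO_w \subseteq \calO_v$, one has $\calO_w^* \subseteq \calO_v^*$, and the natural map $\Gb_w = K^*/\calO_w^* \onto K^*/\calO_v^* = \Gb_v$ is surjective with kernel $\calO_v^*/\calO_w^*$. The restriction of $\pi_v$ to $\calO_v^*$ surjects onto $k_v^*$ with kernel $1 + \frakm_v \subseteq \calO_w^*$, so $\calO_v^*/\calO_w^* \cong k_v^*/\calO_u^* = \Gb_u$. Combining these identifications yields
$$
0 \longrightarrow \Gb_u \longrightarrow \Gb_w \longrightarrow \Gb_v \longrightarrow 0.
$$

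The construction is essentially routine; the step I would be most careful about is checking that the ordering on $\Gb_w$ inherited from $\calO_w$ is compatible with the orderings on the subgroup $\Gb_u$ and the quotient $\Gb_v$, so that the above is genuinely an exact sequence of ordered $\Z$-modules with $\Gb_u$ a convex subgroup. This reduces to the observation that $w(x) > 0$ if and only if either $v(x) > 0$, or $v(x) = 0$ and $u(\pi_v(x)) > 0$, which is immediate from the definition of $\frakm_w$.
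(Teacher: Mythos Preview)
Your argument is correct and is precisely the standard construction of the composite valuation via the pullback $\calO_w := \pi_v\inv(\calO_u)$. The paper itself does not give a proof of this proposition at all: it merely states the result and refers the reader to \cite[Section~2.3, p.~45]{engler-prestel}, so your write-up actually supplies what the paper omits. The approach you take coincides with the one in that reference.
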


\autosyncline{350}It is easy to prove that $\Gb_w$ is $p$-divisible
if and only if both $\Gb_u$ and $\Gb_v$ are $p$-divisible.
This follows from the exact sequence \eqref{compval-seq},
combined with the fact that the groups are torsion-free.

\subsection{Quadratic forms}

\begin{define}
\autosyncline{360\\funcfield0}A \emph{quadratic form} $Q$ over a field $K$
is a polynomial over $K$ in any number of variables,
which is homogeneous of degree two.

\autosyncline{364}In the case that $\fchar K \neq 2$ (for us this will always be the case),
we can do a linear variable transformation such that $Q$ becomes of the form
$$
	Q(x_1, x_2, \dots, x_n) = a_1 x_1^2 + \dots + a_n x_n^2 \qquad (a_i \in K)
.$$
\autosyncline{369}We abbreviate this as $Q = \form{a_1, \dots, a_n}$.
In what follows, we will always work with quadratic forms in the latter notation.

\autosyncline{372}We define two operators on quadratic forms:
the \emph{orthogonal sum} ($\perp$) and \emph{tensor product} ($\otimes$).
Let $Q_1 = \form{a_1, a_2, \dots, a_n}$ and $Q_2 = \form{b_1, b_2, \dots, b_m}$.
Then
\begin{align*}
	Q_1 \perp Q_2 &= \form{a_1, a_2, \dots, a_n, b_1, b_2, \dots, b_m}, \\
	Q_1 \otimes Q_2 &= \form{a_1b_1, a_1b_2, \dots, a_1b_m, a_2b_1, a_2b_2, \dots, a_2b_m, \dots, a_nb_1, a_nb_2, \dots, a_nb_m}.
\end{align*}
\autosyncline{380}With these operators, the set of quadratic forms over $K$ becomes a semiring.

\autosyncline{382}A quadratic form $\form{a_1, \dots, a_n}$ is called \emph{isotropic} over $K$
if and only if there exist $x_1, \dots, x_n \in K$, not all zero,
such that $a_1 x_1^2 + \dots + a_n x_n^2 = 0$.
Otherwise, the quadratic form is called \emph{anisotropic}.

\autosyncline{387}An important special class of quadratic forms are the \emph{Pfister forms}.
These are the quadratic forms which can be written as
$$
	\form{1, a_1} \otimes \form{1, a_2} \otimes \dots \otimes \form{1, a_n}
.$$
\end{define}

\autosyncline{395}The following propoposition will be crucial to prove the Main Theorem.
It gives a way to reduce isotropicity of quadratic forms
from a valued field $K$ to the residue field $k$,
provided that the value group is not $2$-divisible.

\begin{prop}\label{qfresidue}
\autosyncline{401}Let $K$ be a field with a valuation $v: K^* \onto \Gamma$, and let $k$ be its residue field.
Assume $\fchar k \neq 2$.
Let $t \in K$ have odd valuation (i.e.\ $v(t) \notin 2 \Gamma$).
Consider two quadratic forms
$Q_1 = \form{a_1, \dots, a_n}$ and $Q_2 = \form{b_1, \dots, b_m}$ over $K$,
\autosyncline{406}such that all $a_i$'s and $b_j$'s have valuation $0$.
If $Q_1 \perp (\form{t} \otimes Q_2)$ is isotropic over $K$,
then either $Q_1$ or $Q_2$ is isotropic over the residue field $k$.
\end{prop}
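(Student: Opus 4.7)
The plan is to start from a nontrivial zero of $Q_1 \perp (\form{t} \otimes Q_2)$ in $K^{n+m}$, split the defining equation according to the parity of the valuation of each term, and then perform the standard reduction to the residue field on whichever of $Q_1, Q_2$ survives. Concretely, I would fix a nonzero tuple $(x_1, \dots, x_n, y_1, \dots, y_m) \in K^{n+m}$ satisfying
$$
a_1 x_1^2 + \dots + a_n x_n^2 + t(b_1 y_1^2 + \dots + b_m y_m^2) = 0,
$$
and observe that, because $v(a_i) = 0$, every nonzero term $a_i x_i^2$ has valuation in $2\Gamma$, while because $v(t) \notin 2\Gamma$ and $v(b_j) = 0$, every nonzero term $t b_j y_j^2$ has valuation in $v(t) + 2\Gamma$. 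These are two different cosets of $2\Gamma$ in $\Gamma$.

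The key step is then to write $S_1 := \sum_i a_i x_i^2$ and $S_2 := t\sum_j b_j y_j^2$, so that $S_1 + S_2 = 0$. Since the ultrametric inequality puts $v(S_1)$ in the even coset $2\Gamma$ and $v(S_2)$ in the odd coset $v(t) + 2\Gamma$, the equality $S_1 = -S_2$ is possible only if both sides vanish. Hence $S_1 = 0$ and $S_2 = 0$ separately. The original tuple being nonzero forces at least one of $(x_1, \dots, x_n)$ and $(y_1, \dots, y_m)$ to be nonzero, so either $Q_1$ or $Q_2$ is already isotropic over $K$ in its own right.

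It then remains to descend this isotropy from $K$ to the residue field $k$; by symmetry I treat the case of $Q_1$. The idea is the usual scaling trick: choose an index $i_0$ with $v(x_{i_0}) = \min_i v(x_i) \in \Gamma$, replace each $x_i$ by $x_i/x_{i_0}$, and note that after this rescaling all coordinates lie in $\calO$ and at least one (namely $x_{i_0}$) is a unit. Applying the residue map $\pi: \calO \onto k$ to the identity $\sum_i a_i x_i^2 = 0$ and using that $\pi(a_i) \neq 0$ for every $i$ (since $v(a_i) = 0$) then yields a nontrivial zero of $\form{\pi(a_1), \dots, \pi(a_n)} = Q_1$ over $k$.

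The whole argument really hinges on the parity-of-valuation observation in the second paragraph; once the splitting $S_1 = S_2 = 0$ is in hand, the mod-$\frakm$ reduction is entirely routine. I do not foresee any genuine obstacle: the hypothesis $v(t) \notin 2\Gamma$ is precisely calibrated to make the two families of terms live in disjoint cosets of $2\Gamma$, which is exactly what is needed to force the clean split.
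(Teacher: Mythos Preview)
There is a genuine gap in your second paragraph. The assertion that ``the ultrametric inequality puts $v(S_1)$ in the even coset $2\Gamma$'' is not correct: the ultrametric inequality only gives $v(S_1) \geq \min_i v(a_i x_i^2)$, and when several terms share that minimum there can be cancellation pushing $v(S_1)$ strictly above it, into any coset whatsoever. Concretely, take $K = k((t))$ with the $t$-adic valuation, $Q_1 = \form{1,-1}$, $Q_2 = \form{1}$, and the isotropic vector $(x_1,x_2,y_1) = (1,\sqrt{1+t},1)$ of $\form{1,-1,t}$. Then $S_1 = 1 - (1+t) = -t$ has valuation~$1$, which is odd, and $S_1 \neq 0$. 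So the clean split $S_1 = S_2 = 0$ simply does not happen in general, and your argument breaks before the descent step.

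The parity observation you make about individual terms is exactly the right one, but it has to be used differently. The paper's proof (and the standard fix) is to pick, among \emph{all} $n+m$ monomials $x_1^2,\dots,x_n^2,\,t y_1^2,\dots,t y_m^2$, one of minimal valuation and divide the whole equation by it. If the minimum is attained at some $x_{i_0}^2$, then after dividing by $x_{i_0}^2$ every term lies in $\calO$; moreover each $t b_j (y_j/x_{i_0})^2$ has valuation in $v(t)+2\Gamma$, hence is nonzero in $\Gamma$ while being $\geq 0$, so it is strictly positive and dies in the residue field. What survives is $\sum_i \bar a_i\,\overline{(x_i/x_{i_0})}^{\,2} = 0$ in $k$ with the $i_0$-th coordinate equal to $1$, giving isotropy of $Q_1$ over $k$. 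The case where the minimum is some $t y_{j_0}^2$ is symmetric and yields isotropy of $Q_2$. Your scaling trick in the third paragraph is essentially this, but it must be applied to the full equation from the start, not after an unjustified splitting.
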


\begin{proof}
\autosyncline{412}For discrete valuations, see \cite[VI.1.9]{lam-qf}.
In the general case, assume $a_1 x_1^2 + \dots + a_n x_n^2 + t b_1 y_1^2 + \dots + t b_m y_m^2 = 0$.
Consider an element from $\{x_1^2, \dots, x_n^2, t y_1^2, \dots, t y_m^2\}$
with minimal valuation.
If $x_i^2$ has minimal valuation, then $a_1 (x_1/x_i)^2 + \dots + a_n (x_n/x_i)^2$
\autosyncline{417}will be zero in the residue field.
If $t y_i^2$ has minimal valuation, then $b_1 (y_1/y_i)^2 + \dots + b_n (y_m/y_i)^2$
will be zero in the residue field.
\end{proof}

\autosyncline{422}If $Q_1 = Q_2$, we can formulate the proposition as follows:
\begin{cor}\label{qfresidue-cor}
Let $K$ be a field with a valuation $v: K^* \onto \Gamma$, and let $k$ be its residue field.
Assume $\fchar k \neq 2$.
Let $t \in K$ have odd valuation.
\autosyncline{427}Consider a quadratic form $Q = \form{a_1, \dots, a_n}$ over $K$,
such that all $a_i$'s have valuation $0$.
If $\form{1,t} \otimes Q$ is isotropic over $K$,
then $Q$ is isotropic over the residue field $k$.
\end{cor}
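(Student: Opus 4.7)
The plan is simply to invoke Proposition~\ref{qfresidue} in the special case $Q_1 = Q_2 = Q$. First I would unpack the tensor product: by definition of $\otimes$ and $\perp$, one has
\[
	\form{1, t} \otimes Q \;=\; Q \perp (\form{t} \otimes Q),
\]
so the hypothesis of the corollary says exactly that $Q_1 \perp (\form{t} \otimes Q_2)$ is isotropic over $K$, with $Q_1 = Q_2 = Q$.

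Next I would check that the hypotheses of Proposition~\ref{qfresidue} are satisfied: $\fchar k \neq 2$, $t$ has odd valuation, and all coefficients of both $Q_1$ and $Q_2$ (which are the same $a_i$'s) have valuation $0$. The proposition then concludes that either $Q_1$ or $Q_2$ is isotropic over $k$; since $Q_1 = Q_2 = Q$, both alternatives give that $Q$ is isotropic over $k$, which is the desired conclusion.

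There is essentially no obstacle here: the corollary is just the specialization $Q_1 = Q_2$ of the proposition, and the only minor bookkeeping step is the identification of $\form{1, t} \otimes Q$ with $Q \perp (\form{t} \otimes Q)$, which is immediate from the definitions of $\perp$ and $\otimes$ given in the preceding subsection.
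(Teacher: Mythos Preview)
Your proposal is correct and matches the paper's own treatment exactly: the paper introduces the corollary with the sentence ``If $Q_1 = Q_2$, we can formulate the proposition as follows,'' giving no further proof. The only extra detail you spell out is the identification $\form{1,t} \otimes Q = Q \perp (\form{t} \otimes Q)$, which is immediate from the definitions.
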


\autosyncline{433}It is easy to see that
the converse of this proposition and corollary hold for henselian fields:
if $K$ is henselian, and either $Q_1$ or $Q_2$ is isotropic over the residue field,
then $Q_1 \perp (\form{t} \otimes Q_2)$ is isotropic over $K$.

\section{Elliptic curves over function fields}

\autosyncline{442\\funcfield0}Consider an elliptic curve $E$ defined over a field $K$ of characteristic zero.
Such a curve can be defined by an affine equation of the form $Y^2 = f(X) = X^3 + a_2 X^2 + a_4 X + a_6$,
where $f(X)$ has only simple zeros.
There is exactly one point at infinity, which will be denoted by $\ellzero$.
The set of points $E(K)$ forms an abelian group with $\ellzero$ as the neutral element.

\subsection{Denef's method}\label{sec-denef}

\autosyncline{451\\funcfield0}Consider the rational function field $K(Z)$.
Over $K(Z)$ we can define the following quadratic twist of $E$
(sometimes called the \emph{Manin--Denef curve}):
\begin{equation}\label{manin-denef}
	\calE: f(Z)Y^2 = f(X)
.\end{equation}
\autosyncline{457}Consider a point $(X,Y) \in \calE(K(Z))$.
We claim that such a point can be seen as a morphism from $E$ to itself
(morphism as a curve, $\ellzero$ does not have to be mapped to $\ellzero$).
Define the action of $(X,Y) \in \calE(K(Z))$ as follows:
\begin{equation}\label{mdaction}\begin{aligned}
	E(K) &\to E(K) \\
		(x,y) &\mapsto (X(x), Y(x)y)
.\end{aligned}\end{equation}
\autosyncline{465}One can easily check that this is a well-defined morphism on $E(K)$.
The identity is given by $(Z,1)$,
and we denote its multiples $n \cdot (Z,1)$ by $(\chebX_n, \chebY_n) \in \calE(K(Z))$.
This determines rational functions $\chebX_n, \chebY_n \in K(Z)$,
which obviously depend on the elliptic curve $E$.

\autosyncline{471}The curve $\calE$ was first used by Denef
to prove existential undecidability for $\R(Z)$.
The proof is based on the following theorem (see \cite[Lemma~3.1]{denef-real}),
where $\End_K(E)$ stands for the group of endomorphisms of $E$ defined over $K$
and $E[2](K)$ stands for the group of $K$-rational points on $E$ having order dividing $2$.
\begin{theorem}[Denef]\label{denef}
\autosyncline{477}The group $\calE(K(Z))$ is isomorphic to $\End_K(E) \oplus E[2](K)$.
Under this isomorphism, the action \eqref{mdaction}
translates to an action of $(\phi, T) \in \End_K(E) \oplus E[2](K)$ on $E$
by mapping $P \in E(K)$ to $\phi(P) + T$.
\end{theorem}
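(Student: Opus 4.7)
The plan is to identify $\calE(K(Z))$ with a quadratic twist of $E(K(E))$ and then to decompose $K$-morphisms $E \to E$.

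First I would trivialize the twist over a quadratic extension. Set $L := K(E) = K(Z, y)$ with $y^2 = f(Z)$, and let $\sigma$ be the nontrivial element of $\gal(L/K(Z))$, so that $\sigma(y) = -y$. The substitution $(X,Y) \mapsto (X, yY)$ identifies $\calE_L$ with $E_L$. Since $\calE$ is precisely the quadratic twist of $E$ by the character $\sigma \mapsto -1$, standard Galois descent yields a group isomorphism
\[
\calE(K(Z)) \;\cong\; \{P \in E(L) : \sigma(P) = -P\}.
\]

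Next I would identify $E(L)$ with $\operatorname{Mor}_K(E,E)$, the group of $K$-morphisms $E \to E$ under pointwise addition. Since $L = K(E)$ is the function field of $E$ and $E$ is a smooth projective curve, giving an $L$-point of $E$ is the same as giving a $K$-morphism $E \to E$. Every such morphism $\mu$ decomposes uniquely as $\mu = \phi + T$ with $T := \mu(\ellzero) \in E(K)$ (viewed as a constant morphism) and $\phi := \mu - T \in \End_K(E)$, producing a group isomorphism $E(L) \cong \End_K(E) \oplus E(K)$.

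Now I would translate the condition $\sigma(P) = -P$ through these identifications. Geometrically $\sigma$ corresponds to the hyperelliptic involution $[-1] \colon E \to E$, so its action on $\mu \in E(L) = \operatorname{Mor}_K(E,E)$ is by precomposition: $(\sigma \cdot \mu)(P) = \mu(-P)$. Writing $\mu(P) = \phi(P) + T$ gives $\mu(-P) = -\phi(P) + T$ and $-\mu(P) = -\phi(P) - T$, so the condition reduces to $T = -T$, i.e.\ $T \in E[2](K)$. This yields $\calE(K(Z)) \cong \End_K(E) \oplus E[2](K)$. The last assertion of the theorem is then immediate: the morphism on $E$ corresponding to $(\phi, T)$ under all these identifications is by construction the map $P \mapsto \phi(P) + T$, which unwinding coordinates is exactly the action \eqref{mdaction}.

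The step I expect to be most delicate is the compatibility of group laws: checking that the elliptic-curve group law on $\calE(K(Z))$, transported via the trivialization to $E(L)$, really coincides with pointwise addition of morphisms on $\operatorname{Mor}_K(E,E)$, and that under the decomposition $\mu = \phi + \mu(\ellzero)$ this matches componentwise addition in $\End_K(E) \oplus E(K)$. Once that bookkeeping is in place, the explicit shape $(x,y) \mapsto (X(x), Y(x)\,y)$ of a morphism satisfying $\mu(-P) = -\mu(P)$ follows immediately from writing $K(E) = K(x) \oplus K(x)\cdot y$ and matching even and odd parts, recovering exactly the defining equation \eqref{manin-denef} of $\calE$.
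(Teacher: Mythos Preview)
The paper does not actually prove this theorem: it is stated with attribution to Denef and a reference to \cite[Lemma~3.1]{denef-real}, and is used as a black box thereafter. So there is no in-paper proof to compare against.

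Your argument is correct and is essentially the standard proof. The key steps --- trivializing the quadratic twist over $L=K(E)$, identifying $E(L)$ with $\operatorname{Mor}_K(E,E)$, decomposing a morphism as an endomorphism plus a translation, and reading the twist condition $\sigma(P)=-P$ as $\mu\circ[-1]=[-1]\circ\mu$, hence $2T=\ellzero$ --- are exactly how Denef's lemma is usually established. The compatibility of group laws that you flag as delicate is in fact routine: the group law on $\calE(K(Z))$ is transported to the elliptic-curve group law on $E(L)$ by the $L$-isomorphism, and under $E(L)\cong\operatorname{Mor}_K(E,E)$ this is pointwise addition of morphisms, which is visibly componentwise on $(\phi,T)$. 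One small point worth making explicit is that the identification $E(L)\cong\operatorname{Mor}_K(E,E)$ uses that $E$ is proper (so an $L$-point extends to a morphism from the smooth curve $E$); you allude to this but it is the place where the argument would fail for, say, an affine curve.
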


\autosyncline{483}In our applications,
we will take a curve without complex multiplication (i.e.\ $\End(E) \cong \Z$).
Then $\calE(K(Z)) \cong \Z \oplus E[2](K)$,
hence $2 \cdot \calE(K(Z)) \cong \Z$.
This is how we will make our diophantine model of $\Z$ over $K(Z)$.

\autosyncline{489}It turns out that we can easily describe the functions $\chebX_n$ and $\chebY_n$
locally at $Z\inv$:
\begin{prop}\label{XYinfinity}
Let $n \in \Z \setminus \{0\}$.
In the field $K((Z\inv))$, the functions $\chebX_n$ and $\chebY_n$ satisfy:
$$
	\chebX_n(Z) = \frac{1}{n^2} Z + O(Z^0)
	\quad \text{and} \quad
	\chebY_n(Z) = \frac{1}{n^3} + O(Z^{-1})
.$$
(\autosyncline{499}the notation $f(Z) = g(Z) + O(Z^{-n})$ means $v_{Z\inv}(f - g) \geq n$.)
\end{prop}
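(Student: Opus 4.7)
The idea is to identify $(\chebX_n, \chebY_n)$ with the multiplication-by-$n$ morphism $[n]: E \to E$ via Theorem~\ref{denef}, and then read off its local behaviour at $\ellzero$ from the formal group law. Under the Denef isomorphism the identity $(Z,1)$ corresponds to $(\mathrm{id}_E, 0) \in \End_K(E) \oplus E[2](K)$, so $n \cdot (Z,1)$ corresponds to $([n], 0)$; comparing with the action formula \eqref{mdaction} then gives $[n]^* X = \chebX_n(X)$ and $[n]^* Y = \chebY_n(X) \cdot Y$ as functions on $E$, reducing the task to understanding $[n]^* X$ and $[n]^* Y / Y$ near $\ellzero$.

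Next I would work in the completion of $K(E)$ at $\ellzero$ with the standard local parameter $t = -X/Y$. Setting $w = -1/Y$ and inverting the Weierstrass equation $Y^2 = f(X)$ (rewritten as $w = t^3 + a_2 t^2 w + a_4 t w^2 + a_6 w^3$) yields the familiar expansions $X = t^{-2} + O(t^{-1})$ and $Y = -t^{-3} + O(t^{-2})$ in $K((t))$. Since the formal group law of an elliptic curve in Weierstrass form satisfies $[n]^* t = n t + O(t^2)$, direct substitution gives $[n]^* X = n^{-2} t^{-2} + O(t^{-1})$ and $[n]^* Y / Y = n^{-3} + O(t)$.

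Finally I would translate back from $K((t))$ to $K((Z^{-1}))$. The identification $Z = X$ yields $Z^{-1} = t^2(1 + O(t))$, so the natural inclusion $K((Z^{-1})) \hookrightarrow K((t))$ has ramification index $2$: concretely $v_t = 2\, v_{Z^{-1}}$ on $K((Z^{-1}))$. The elements $\chebX_n - n^{-2} Z$ and $\chebY_n - n^{-3}$ both lie in $K(Z) \subseteq K((Z^{-1}))$; from their $v_t$-valuations ($\geq -1$ and $\geq 1$ respectively), combined with the integrality of $v_{Z^{-1}}$, one concludes $v_{Z^{-1}}(\chebX_n - n^{-2} Z) \geq 0$ and $v_{Z^{-1}}(\chebY_n - n^{-3}) \geq 1$, which is precisely the stated result.

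The only genuinely delicate point is this last valuation comparison across the ramified inclusion $K((Z^{-1})) \hookrightarrow K((t))$; but since the ramification index is exactly~$2$ and valuations take integer values, the argument is essentially automatic and no substantive obstacle arises.
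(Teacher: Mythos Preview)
Your argument is correct and complete, but it follows a genuinely different route from the paper's own proof.

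The paper performs a coordinate change $X = (f(Z)/Z^2) X'$, $Y = (f(Z)/Z^3) Y'$ on $\calE$ so that the new model $\calE'$ has good (cuspidal) reduction modulo $Z\inv$; the reduction is the cusp $\overline{Y'}^{\,2} = \overline{X'}^{\,3}$, whose smooth locus is isomorphic to the additive group via $(X',Y') \mapsto X'/Y'$. One then computes $n\cdot(1,1) = (1/n^2,1/n^3)$ directly on the cusp and lifts. Your approach instead views $(\chebX_n,\chebY_n)$ as the endomorphism $[n]$ on $E$ and reads off its leading behaviour from the formal group law $[n]^*t = nt + O(t^2)$ at $\ellzero$, then transfers the estimate along the ramified embedding $K((Z\inv)) \hookrightarrow K((t))$.

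Both arguments exploit the same underlying phenomenon---the group law degenerates to addition near $\ellzero$---but package it differently. The paper's cusp reduction stays entirely inside $K((Z\inv))$ and avoids any ramification bookkeeping; your formal-group argument is perhaps more conceptual and invokes a standard fact from elliptic curve theory, at the cost of the small index-$2$ valuation comparison at the end (which, as you note, is automatic by integrality). Either way the computation is short; the paper's version is slightly more self-contained, yours slightly more recognisable to a reader comfortable with formal groups.
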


\begin{proof}
\autosyncline{503}Apply the following coordinate transformation on $\calE$:
$X = (f(Z)/Z^2) X'$ and $Y = (f(Z)/Z^3) Y'$.
Using $f(X) = X^3 + a_2 X^2 + a_4 X + a_6$, we get
\begin{equation}
	\calE': Y'^2 = X'^3 + a_2 \frac{Z^2}{f(Z)} X'^2 + a_4 \frac{Z^4}{f(Z)^2} X' + a_6 \frac{Z^6}{f(Z)^3}
\end{equation}
\autosyncline{509}Since $f$ has degree $3$, the coefficients of $X'^2$, $X'$ and $1$ in this equation
have positive valuation at $Z\inv$.

\autosyncline{512}Let $P_1'$ be the point $(Z^3/f(Z), Z^3/f(Z))$ on $\calE'$,
this corresponds to $(Z,1)$ on $\calE$.
Let $P_n' := n \cdot P_1'$.
Since $f(Z) = Z^3 + O(Z^2)$, we have to show that $P_n' := (1/n^2 + O(Z\inv), 1/n^3 + O(Z\inv))$.

\autosyncline{517}It suffices to look at the reduction of $\calE'$ modulo $Z\inv$.
The reduction $\overline{\calE'}$ is the cusp $\overline{Y'}^2 = \overline{X'}^3$.
The group law on the set of non-singular points of $\overline{\calE'}(K)$
is isomorphic to the additive group $K,+$ by the following correspondence
(see \cite[III.2.5]{silverman}):
\begin{align*}
	\overline{\calE'}(K) \setminus \{(0,0)\} &\to K,+ \\
	(X',Y') &\mapsto X'/Y'
\end{align*}
\autosyncline{526}Using this, we get $\overline{P_n'} = n \cdot \overline{P_1'} = n \cdot (1,1) = (1/n^2, 1/n^3)$;
hence $P_n' = (n^2 + O(Z\inv), 1/n^3 + O(Z\inv))$.
\end{proof}

\subsection{Moret-Bailly's method}

\autosyncline{534\\funcfield0}In \cite{mb-ell}, Moret-Bailly generalized Denef's method
to make it work for function fields of curves
(and then automatically also higher-dimensional varieties),
as opposed to rational function fields.
The idea is to take an embedding
\autosyncline{539}of $K(Z)$ into a function field $K(C)$ of a curve such that
$\calE(K(Z)) = \calE(K(C))$.

\autosyncline{542}In the theorem below, we will slightly generalize the main theorem by Moret-Bailly.
We warn the reader that this section assumes some familiarity with the paper \cite{mb-ell}.
However, the results are not needed for rational function fields.

\autosyncline{546}We need the definition of \emph{admissible function} from \cite[Definition~1.5.2]{mb-ell}.
We will not need the set $Q$ of closed points on $C$, which does not matter for us.
Essentially, we will ignore condition~(iii), we can simply take any zero
of the admissible function if necessary.
Furthermore, we will always take $\Gamma = E$
(\autosyncline{551}we will however vary the map $\pi: \Gamma \to \PS{1}$).

\begin{define}\label{def-admissible}
\autosyncline{554}Let $C$ be a smooth projective geometrically connected curve over a field $K$ of characteristic zero.
Let $E$ be an elliptic curve over $K$ and $\pi: E \to \PS{1}$ a double cover.
A function $g: C \to \PS{1}$ is called \emph{admissible} for $\pi$ if
\begin{enumerate}
\item[(0)] $\pi: E \to \PS{1}$ is \'etale above $\infty$ and ramified above $0$
(\autosyncline{559}see \cite[1.4.4]{mb-ell}).
\item[(i)] $g$ has no ramification index $\geq 3$ (the ramification is simple).
\item[(ii)] $g$ is \'etale above $\infty$ and the branch points of $\pi$.
\end{enumerate}
\end{define}

\begin{theorem}\label{mb-general}
\autosyncline{566}Let $K$ be a field of characteristic zero and $E$ the elliptic curve $Y^2 = f(X)$,
where $f(X) \in K[X]$ of degree $3$.
Let $C$ be a smooth projective geometrically connected curve defined over $K$.
Let $g: C \to \PS{1}$ be an admissible function for some $\pi: E \to \PS{1}$.
\autosyncline{571}Let $\bar{K}$ denote the algebraic closure of $K$.

\autosyncline{573}Let $\calS$ be a finite set of tuples $(\alpha,\beta,\gamma,\delta,\eps) \in K^5$
such that $f(\alpha) \neq 0$ and $\beta\eps - \gamma\delta \neq 0$.
Then there exist infinitely many $\lambda \in \Q$ such that for every $(\alpha,\beta,\gamma,\delta,\eps) \in \calS$
the set of $\bar{K}(C)$-points of
the elliptic curve $f(\alpha + (\beta\lambda+\gamma)(\delta\lambda+\eps)\inv g\inv) Y^2 = f(X)$
\autosyncline{578}is exactly
$$
	\Z \cdot \left(\alpha + \frac{\beta\lambda+\gamma}{(\delta\lambda+\eps)g}, 1\right) \oplus E[2](\bar{K})
$$
(the $\cdot$ denotes multiplication by an integer on the elliptic curve).
\end{theorem}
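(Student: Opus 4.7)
The plan is, for each tuple $\tau = (\alpha,\beta,\gamma,\delta,\eps) \in \calS$, to reduce the statement to Moret-Bailly's main theorem in \cite{mb-ell} by absorbing the dependence on $\tau$ and $\lambda$ into an automorphism of $\PS{1}$. For $\lambda \in \Q$ with $\delta\lambda + \eps \neq 0$, the map $\mu_{\tau,\lambda} : \PS{1} \to \PS{1}$, $t \mapsto \alpha + \frac{\beta\lambda + \gamma}{(\delta\lambda + \eps)\,t}$, is an automorphism since $\beta\eps - \gamma\delta \neq 0$. Setting $h_{\tau,\lambda} := \mu_{\tau,\lambda} \circ g : C \to \PS{1}$, the displayed elliptic curve is exactly the quadratic twist $\calE_{\tau,\lambda} : f(h_{\tau,\lambda})\,Y^2 = f(X)$, and its alleged generator is $(h_{\tau,\lambda}, 1)$.

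The central step is to verify that $h_{\tau,\lambda}$ is admissible (Definition~\ref{def-admissible}) for a suitable double cover $\pi_{\tau,\lambda} : E \to \PS{1}$, for all but finitely many $\lambda \in \Q$. Condition~(i) on simple ramification of $h_{\tau,\lambda}$ transfers automatically from $g$ because $\mu_{\tau,\lambda}$ is an isomorphism. For conditions~(0) and~(ii), one takes $\pi_{\tau,\lambda}$ to be the composition of $\pi$ with whatever automorphism of $\PS{1}$ is needed to restore the ``$0$ and $\infty$'' convention after applying $\mu_{\tau,\lambda}$; the specific choice of two distinct distinguished points in (0) is harmless. Condition~(ii) --- that $h_{\tau,\lambda}$ be \'etale above $\infty$ and above the four branch points of $\pi_{\tau,\lambda}$ --- then reduces to a finite list of non-coincidence conditions between fixed divisors on $C$ (notably, the ramification divisor of $g$) and divisors depending algebraically on $\lambda$ (the moving branch locus of $\pi_{\tau,\lambda}$ and the moving point $\mu_{\tau,\lambda}^{-1}(\infty) \in \PS{1}$). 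Each such condition fails only on a finite subset of $\Q$.

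Once admissibility is established, Moret-Bailly's theorem applies to $(C, h_{\tau,\lambda}, \pi_{\tau,\lambda})$ and yields $\calE_{\tau,\lambda}(\bar{K}(C)) = \Z \cdot (h_{\tau,\lambda}, 1) \oplus E[2](\bar{K})$. This produces, for each $\tau \in \calS$, a cofinite subset $\Lambda_\tau \subseteq \Q$ of ``good'' $\lambda$. Since $\calS$ is finite, the intersection $\bigcap_{\tau \in \calS} \Lambda_\tau$ is again cofinite, in particular infinite; any $\lambda$ in this intersection witnesses the conclusion of the theorem.

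The main obstacle I expect is the admissibility bookkeeping in the second step. For each $\tau$ one must explicitly identify the finitely many bad $\lambda$ to exclude, arising from (a) $\delta\lambda + \eps = 0$; (b) $\mu_{\tau,\lambda}$ carrying a branch point of $\pi$ onto the $g$-image of the ramification locus of $g$; and (c) collisions of the moving branch points of $\pi_{\tau,\lambda}$ with $\infty$ or with the branch locus of $g$. All three are proper closed conditions in $\lambda$ and therefore remove only finitely many values, but making this precise requires some geometric care with the double cover $\pi_{\tau,\lambda}$. After that, the argument is essentially formal, combining Moret-Bailly's theorem with the finiteness of $\calS$.
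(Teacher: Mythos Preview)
Your proposal contains a genuine gap in the central step. You claim that once $h_{\tau,\lambda}$ is admissible for $\pi_{\tau,\lambda}$, Moret-Bailly's theorem \cite[Theorem~1.8]{mb-ell} directly yields $\calE_{\tau,\lambda}(\bar{K}(C)) = \Z \cdot (h_{\tau,\lambda},1) \oplus E[2](\bar{K})$. That is not what the theorem says. Admissibility of a function $h$ is only a hypothesis allowing one to \emph{apply} the theorem; its conclusion is that there exists a \emph{Hilbert subset} $\calH$ of the base field such that, for $\mu \in \calH$, the natural embedding $\calE_{\alpha,Z}(\bar{K}(Z)) \hookrightarrow \calE_{\alpha,\mu h}(\bar{K}(C))$ (via $Z \mapsto \mu h$) is an isomorphism. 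For an arbitrary admissible $h$ the group of $\bar{K}(C)$-points of the twist can be strictly larger than $\Z \cdot (h,1) \oplus E[2](\bar{K})$; the equality is only guaranteed after this further Hilbertian specialisation. Consequently your sets $\Lambda_\tau$ are not cofinite in $\Q$, and the ``intersection of cofinite sets'' argument does not go through.

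The paper organises the proof so that Hilbert irreducibility can be invoked. Rather than varying the admissible function with $\lambda$, it first chooses a \emph{single} $\kappa \in \Q^*$ so that $h := \kappa g$ is simultaneously admissible for all the covers $\pi_\alpha : (X,Y) \mapsto 1/(X-\alpha)$, where $\alpha$ ranges over the first coordinates of tuples in $\calS$. Moret-Bailly's theorem then produces, for each $\alpha$, a Hilbert set $\calH_\alpha$ of scalars $\mu$ for which the twist by $\mu h$ has the desired point group. The M\"obius relation $(\beta\lambda+\gamma)/(\delta\lambda+\eps) = 1/(\kappa\mu)$ pulls each $\calH_\alpha$ back to a Hilbert set $\calH'_{(\alpha,\beta,\gamma,\delta,\eps)}$ in the $\lambda$-line. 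Since finite intersections of Hilbert sets are Hilbert and $K_0$ (a finitely generated extension of $\Q$) is Hilbertian, the intersection over $\calS$ meets $\Q$ in an infinite set. The point you are missing is precisely that the good $\lambda$'s form a Hilbert set rather than a cofinite set, and the whole argument must be set up so that this weaker conclusion still suffices.
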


\begin{proof}
\autosyncline{586}We need to adapt the proof by Moret-Bailly to account for two things:
first of all, we need several good functions (one for every element of $\calS$).
This works because intersections of Hilbert sets are still Hilbert sets.
Second, we need to some kind of coordinate change
$g\inv \leftrightarrow \alpha + (\beta\lambda+\gamma)(\delta\lambda+\eps)\inv g\inv$.

\autosyncline{592}For every $(\alpha,\beta,\gamma,\delta,\eps) \in \calS$, let $\pi_\alpha$ be the double cover
\begin{align*}
	\pi_\alpha: \hspace{5ex} E &\to \PS{1}: \\
	(X,Y) &\mapsto 1/(X - \alpha)
.\end{align*}
\autosyncline{597}Note that $\pi_\alpha\inv(0)$ is the point $\ellzero$ on $E$
and that $\pi_\alpha\inv(\infty)$ are the points on $E$ with $X$-coordinate $\alpha$.
By assumption, these latter points are not $2$-torsion.
Hence, $\pi_\alpha$ is \'etale over $\infty$ and ramified over $0$.

\autosyncline{602}Let $\calB$ be the union of all the branch points of these $\pi_\alpha$,
excluding $0$.
By assumption, $g$ is admissible for some $\pi: E \to \PS{1}$,
therefore $g: C \to \PS{1}$ is \'etale above an open subset of $\PS{1}$,
which includes $0$ (a branch point of $\pi$) and $\infty$.
\autosyncline{607}It follows that, for almost all $\kappa \in K^*$, the function $\kappa g$ is \'etale
above all points of $\calB$.
Choose such an $\kappa \in \Q^*$.
Then $h := \kappa g$ is admissible for every given $\pi_\alpha$
(note that $g$ and $h$ are equal above $0$ and $\infty$).

\autosyncline{613}Now fix $(\alpha,\beta,\gamma,\delta,\eps) \in \calS$.
Define the following elliptic curves,
depending an a $\xi$ which is an element of some extension of $K$.
\begin{equation}\label{twist}
	\calE_{\alpha,\xi}: f(\alpha + 1/\xi) Y^2 = f(X)
.\end{equation}
\autosyncline{619}If we would strictly follow \cite[1.4.6]{mb-ell},
then we would have the equation $Y^2 = \xi^4 f(\alpha + 1/\xi) f(X)$.
However, the equation \eqref{twist} can be obtained by a coordinate change for the $Y$ variable.

\autosyncline{623}Write $K(Z)$ for the rational function field over $K$.
Note that $K(\alpha + 1/Z) = K(Z)$.
Because $E$ does not have complex multiplication,
Theorem~\ref{denef} says that
\begin{equation}\label{mb-denef}
	\calE_{\alpha,Z}(\bar{K}(Z)) = \Z \cdot (\alpha + 1/Z,1) \oplus E[2](\bar{K})
.\end{equation}

\autosyncline{631}But we want to work over $\bar{K}(C)$ instead of $\bar{K}(Z)$.
The function $h = \kappa g$ is admissible for $\pi_\alpha$, so we can apply \cite[Theorem~1.8]{mb-ell}.
Let $K_0$ be the field generated over $\Q$ by all the coefficients of elements of $\calS$.
There exists a Hilbert subset $\calH_\alpha \subseteq K_0$
such that for all $\mu \in \calH_\alpha$,
\autosyncline{636}we have
\begin{align}\label{mb-isom}
	\calE_{\alpha,Z}(\bar{K}(Z)) &\cong \calE_{\alpha,\mu h}(\bar{K}(C)) \\
	Z &\mapsto \mu h = \kappa \mu g \notag
.\end{align}
(\autosyncline{641}see \cite[Section~11.1]{fried-jarden} for the definition of Hilbert sets,
intuitively a Hilbert set contains `most' elements of $K_0$).
Note that we always have an embedding
$\calE_{\alpha,Z}(\bar{K}(Z)) \into \calE_{\alpha,\mu h}(\bar{K}(C))$,
but in general this is not surjective.

\autosyncline{647}For $(\alpha,\beta,\gamma,\delta,\eps) \in \calS$,
define $\calH'_{(\alpha,\beta,\gamma,\delta,\eps)}$
to be the set of all $\lambda \in K_0$ such that
\begin{equation}\label{mb-transform}
	\frac{\beta \lambda+\gamma}{\delta \lambda+\eps} = \frac{1}{\kappa \mu} \text{ for some } \mu \in \calH_\alpha
.\end{equation}
\autosyncline{653}Since $K_0((\beta Z+\gamma)/(\delta Z+\eps)) = K_0(1/(\kappa Z))$,
if follows from the definition of Hilbert sets that $\calH'_{(\alpha,\beta,\gamma,\delta,\eps)}$
is a Hilbert subset of $K_0$.
Let $\calH'$ be the intersection of all these $\calH'_{(\alpha,\beta,\gamma,\delta,\eps)}$.
Since an intersection of finitely many Hilbert sets is still a Hilbert set
\autosyncline{658}and $K_0$ is finitely generated over the Hilbertian field $\Q$,
it follows that $\calH' \cap \Q$ is infinite.
Now the result follows for all $\lambda \in \calH'$
by putting together \eqref{mb-denef}, \eqref{mb-isom} and \eqref{mb-transform}.
\end{proof}

\section{First version of the Main Theorem}\label{sec-valmain1}

\autosyncline{668\\funcfield0}This whole section is devoted to the proof of the following theorem:

\begin{mtheorem}\label{main-valued1}
\autosyncline{671}Let $K$ be a field of characteristic zero with a valuation $v: K^* \onto \Gamma$.
Let $\calO$ denote the valuation ring and $k$ the residue field.

\autosyncline{674}Assume the following conditions are satisfied:
\begin{enumerate}[(i)]
\item\label{cond1-char} The characteristic of the residue field $k$ is zero.
\item\label{cond1-gamma} The value group $\Gamma$ is not $2$-divisible.
\item\label{cond1-qf}
	\autosyncline{679}Let $F$ be a maximal field contained in $\calO$.
	There is an integer $q \geq 0$ such that there exists
	a $2^q$-dimensional Pfister form with coefficients in $F$
	which is anisotropic over $k$
	and such that every $2^{q+2}$-dimensional Pfister form
	\autosyncline{684}over a finite extension of $F(Z)$ is isotropic.
\end{enumerate}

\autosyncline{687}Let $C$ be a smooth projective geometrically connected curve defined over $K$
with a $K$-rational point.
Let $K(C)$ be its function field.
Then there exists a diophantine model of $\Z$ over $K(C)$
with coefficients in some finitely generated subfield $\calL_0$ of $K(C)$.
\end{mtheorem}

\begin{remark}
\autosyncline{695}This implies the negative answer to HTP for $K(C)$ with coefficients in $\calL_0$
by Proposition~\ref{diophmodel}.
However, as Eisentr\"ager notes in the introduction of \cite{eisentraeger-padic},
this undecidability can be ``trivial'' in some cases,
simply because of certain elements appearing in $\calL_0$.
\autosyncline{700}To explain this better,
consider Tarski's proof that the theory of $\R$ in the language $\{0,1,+,\cdot,\leq\}$
admits quantifier elimination (see \cite{tarski-real}).
This immediately implies decidability for first-order sentences
(in particular, diophantine equations).
\autosyncline{705}However, if we add some non-computable real $\alpha$ to the language,
we still have quantifier elimination,
but then atomic formulas (such as $2\alpha^3 - \alpha + 4 \geq 0$) are no longer decidable.
This shows that undecidability can sometimes be a simple consequence
of the language.

\autosyncline{711}However, for a general field $K$, it is not at all clear what the natural language
(or the corresponding field $\calL_0$) should be.
In Section~\ref{sec-vallang}, we will discuss the coefficient field $\calL_0$.
In the concrete examples in Section~\ref{sec-examples},
we will see that this field $\calL_0$ is the natural one which one would expect.
\end{remark}

\autosyncline{718}To prove the Main Theorem,
we would like to use the method with two elliptic curves,
as applied on $\C(T, Z)$ by Kim and Roush (\cite{kim-roush-ct1t2})
and on function fields of surfaces over $\C$ by Eisentr\"ager (\cite{eisentraeger-funcfield0}).
The big obstacle however is that $K$ might be much bigger than $F(T)$;
\autosyncline{723}it could be that there is no rank one elliptic curve over $K$.

\autosyncline{725}Take an element $T \in K$ such that $v(T)$ is positive and odd
(this is possible because of condition (\ref{cond1-gamma})).
We will identify $\Z$ with a subgroup of $\Gamma$ by sending $1$ to $v(T)$.
An ordered $\Z$-module is always torsion-free,
so the map $\Z \into \Gamma: n \mapsto n v(T)$ is an embedding of ordered $\Z$-modules.

\subsection{The elliptic curve}\label{sec-ellcurve}

\autosyncline{735\\funcfield0}Let $E$ be an elliptic curve over $\Q$ without complex multiplication.
Choose an equation $Y^2 = f(X) = X^3 + a_2 X^2 + a_4 X + a_6$
for $E$ with $a_2, a_4, a_6 \in \Q$ and $a_6 \neq 0$.
Let $\pi: E \to \PS{1}: (X,Y) \mapsto X\inv$.
Since $C$ was assumed to have a $K$-rational point,
\autosyncline{740}it follows from \cite[2.3.3]{mb-ell} that there exists a
function $g: C \to \PS{1}$ of odd degree which is admissible for $\pi$.
Define $Z$ to be $g\inv$.
In what follows, we will see $Z$ as an element of the function field $K(C)$.
Then $K(C)$ is a finite extension of odd degree of the rational function field $K(Z)$.

\autosyncline{746}Apply Theorem~\ref{mb-general} with
$\calS = \{(0,1,T\iinv,0,1), (T\iinv,1,0,0,1)\}$
and let $\lambda \in \Q^*$ be such that the conclusion of that theorem holds.
Define
$$
	A := (T\iinv + \lambda) Z \quad \text{and} \quad B := T\iinv + \lambda Z
.$$
\autosyncline{753}In the case of a rational function field,
we can take $K(Z) = K(C)$ and then any $\lambda \in \Q^*$ will work.

\autosyncline{756}Define $L := K(C)(\sqrt{f(A)}, \sqrt{f(B)})$,
which will turn out to be a degree $4$ extension of $K(C)$.
In what follows, we assume that we have $T$ and $Z$ in the field of coefficients $\calL_0$.
Both $A$ and $B$ are elements of $\Q(T,Z)$ and $f$ has coefficients in $\Q$,
therefore $f(A)$ and $f(B)$ are diophantine and we can make a diophantine model of $L$ in $K(C)^4$.

\autosyncline{762}Consider the following points on $E(L)$:
$$
	\textstyle
	P_1 := (A, \sqrt{f(A)})
	\quad \text{and} \quad
	P_2 := (B, \sqrt{f(B)})
$$

\begin{lemma}\label{elllemma}
\autosyncline{771}The points $P_1$ and $P_2$ satisfy the following properties:
\begin{enumerate}
\item\label{elllemma-dio} Let $\Z_0 = \Z \setminus \{0\}$.
	The sets of multiples $\Z_0 \cdot P_1$ and $\Z_0 \cdot P_2$ are diophantine over $L$
	(as subsets of $\mathbb{A}^2(L) \cong L^2$).
\item\label{elllemma-indep} $P_1$ \autosyncline{776}and $P_2$ are independent points on $E(L)$.
\item\label{elllemma-deg} Let $\bar{K}$ be the algebraic closure of $K$.
	Then the field $\bar{K}(C)(\sqrt{f(A)}, \sqrt{f(B)})$
	is a degree $4$ extension of $\bar{K}(C)$.
\end{enumerate}
\end{lemma}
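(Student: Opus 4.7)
\emph{Plan.} I would tackle the three parts in the order (\ref{elllemma-deg}), (\ref{elllemma-indep}), (\ref{elllemma-dio}). The common tool will be the pair of quadratic-twist isomorphisms $\calE_A \simeq E$ over $K(C)(\sqrt{f(A)})$ and $\calE_B \simeq E$ over $K(C)(\sqrt{f(B)})$, given by $(x,y) \mapsto (x, y\sqrt{f(A)})$ and the obvious analogue. These send $(A,1)\mapsto P_1$ and $(B,1)\mapsto P_2$, and will let me transfer statements about $P_1, P_2 \in E(L)$ to statements about the explicitly known groups $\calE_A(K(C))$ and $\calE_B(K(C))$ coming from Theorem~\ref{mb-general}.

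\emph{Part (\ref{elllemma-deg}).} It will suffice to show that $f(A)$, $f(B)$ and $f(A)f(B)$ are non-squares in $\bar{K}(C)$. Since $[\bar{K}(C):\bar{K}(Z)] = \deg g$ is odd and odd-degree extensions preserve non-squares, I would reduce to arguing in $\bar{K}[Z]$. Writing $c := T\iinv+\lambda$ and letting $\alpha_1, \alpha_2, \alpha_3 \in \bar{K}$ be the roots of $f$, the roots of $f(A)=f(cZ)$ are the $\alpha_i/c$ and those of $f(B)=f(T\iinv+\lambda Z)$ are the $(\alpha_j-T\iinv)/\lambda$. Extending $v$ to $\bar{K}$ and using that its restriction to $\bar{\Q}$ is the trivial valuation, one has $v(\alpha_i)=v(\lambda)=0$ whereas $v(T\iinv)=-2v(T)<0$; hence $v(c)=-2v(T)$, so the three roots of $f(A)$ all have valuation $2v(T)>0$ while the three roots of $f(B)$ all have valuation $-2v(T)<0$. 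The two root sets are thus disjoint and each is internally distinct, so $f(A), f(B), f(A)f(B)$ are squarefree polynomials in $Z$ of degrees $3, 3, 6$, and in particular not squares.

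\emph{Part (\ref{elllemma-indep}).} With part (\ref{elllemma-deg}) in hand, $\gal(L/K(C)) \cong (\Z/2\Z)^2$ is generated by $\sigma_A$ (negating $\sqrt{f(A)}$, fixing $\sqrt{f(B)}$) and its symmetric partner $\sigma_B$, and $\sigma_A(P_1) = -P_1$, $\sigma_A(P_2) = P_2$. Assume $n_1 P_1 + n_2 P_2 = T$ for some torsion $T \in E(L)$. Applying $\sigma_A$ and subtracting gives $2 n_1 P_1 = T - \sigma_A(T)$, which is again torsion, so $n_1 P_1$ is torsion. But under the twist $P_1$ corresponds to $(A,1) \in \calE_A(K(C))$, which by Theorem~\ref{mb-general} is of infinite order, forcing $n_1 = 0$; symmetrically $\sigma_B$ forces $n_2 = 0$.

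\emph{Part (\ref{elllemma-dio}) and main obstacle.} Theorem~\ref{mb-general} applied with $(\alpha,\beta,\gamma,\delta,\eps) = (0,1,T\iinv,0,1)$ gives $\calE_A(\bar{K}(C)) = \Z\cdot(A,1) \oplus E[2](\bar{K})$. Since $C$ is smooth and geometrically connected, the algebraic closure of $K$ in $K(C)$ equals $K$, so passing to $K(C)$-rational points yields $\calE_A(K(C)) = \Z\cdot(A,1) \oplus E[2](K)$. Because $2 E[2] = 0$, one has $2\calE_A(K(C)) = 2\Z\cdot(A,1)$, and splitting $\Z = 2\Z \cup (1+2\Z)$ gives
\begin{equation}\label{eveneliminator}
\Z\cdot(A,1) \;=\; 2\calE_A(K(C)) \;\cup\; \bigl((A,1)+2\calE_A(K(C))\bigr).
\end{equation}
The right-hand side is diophantine over $K(C)$ because multiplication by $2$ and translation by a fixed point on $\calE_A$ are given by rational maps and a disjunction of diophantine conditions is diophantine. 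Transporting along the twist and using the diophantine model of $L$ inside $K(C)^4$ will show that $\Z\cdot P_1 \subseteq L^2$ is diophantine over $L$; intersecting with the affine chart removes $\ellzero$ and produces $\Z_0\cdot P_1$. The argument for $\Z_0\cdot P_2$ is identical, using $(T\iinv,1,0,0,1)$ instead. The hard part is precisely this existential carving-out in~\eqref{eveneliminator}: the Mordell--Weil group is only known up to the torsion summand $E[2](K)$, and the even/odd decomposition is the device that absorbs that contamination.
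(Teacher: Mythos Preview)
Your argument is correct, and for part~(\ref{elllemma-dio}) it is essentially identical to the paper's (the same even/odd decomposition~\eqref{eveneliminator} followed by transport along the twist). For parts~(\ref{elllemma-indep}) and~(\ref{elllemma-deg}), however, you take a genuinely different route.

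For~(\ref{elllemma-deg}), the paper argues via the structure of $\calE_A(\bar{K}(C))$ given by Theorem~\ref{mb-general}: if $\sqrt{f(A)}\in\bar{K}(C)$ then the twist would be defined over $\bar{K}(C)$ and $\calE_A(\bar{K}(C))$ would inherit all $n$-torsion from $E(\bar{K})$, contradicting the known group structure; the case $\sqrt{f(B)}\in\bar{K}(C)(\sqrt{f(A)})$ is then handled by producing a dependency $2P_2\in\Z P_1$ and invoking part~(\ref{elllemma-indep}). Your direct squarefree computation via root valuations is more elementary and self-contained --- it does not appeal to Theorem~\ref{mb-general} at all.

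For~(\ref{elllemma-indep}), the paper specializes $Z\mapsto T\inv$ in the identity $\chebX_m(A)=\chebX_n(B)$ and uses Proposition~\ref{XYinfinity} to compare valuations ($-3$ versus $-2$). Your Galois-action argument is cleaner but requires~(\ref{elllemma-deg}) as input to get the $(\Z/2\Z)^2$ group; this is why the logical dependencies run in opposite directions (the paper's~(\ref{elllemma-deg}) uses~(\ref{elllemma-indep}), yours goes the other way). Your version also yields the slightly stronger conclusion that no nontrivial $\Z$-combination of $P_1,P_2$ is torsion.
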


\begin{proof}
\autosyncline{784}Let $\calE_A$ be the elliptic curve $f(A) Y^2 = f(X)$
and $\calE_B$ be the elliptic curve $f(B) Y^2 = f(X)$,
both defined over $K(C)$.
According to Theorem~\ref{mb-general},
we have $\calE_A(K(C)) = \Z \cdot (A,1) \oplus E[2](K)$
\autosyncline{789}and $\calE_B(K(C)) = \Z \cdot (B,1) \oplus E[2](K)$.

\autosyncline{791}The set of multiples of $(A,1)$ on $\calE_A(K(C))$ is diophantine because it can be written as
$$
	\big\{2 \cdot \calE_A(K(C))\big\} \cup \big\{(A,1) + 2 \cdot \calE_A(K(C))\big\}
.$$
Since the $K(C)$-rational points of $\calE_A$ are simply given by the elliptic curve equation,
\autosyncline{796}the above set is diophantine.
We will use the affine equation, so we cannot get the point at infinity,
we only get $\Z_0 \cdot (A,1)$.
The coefficients of the equation for $\calE_A$ lie in $\Q(T,Z)$,
so we just need $T$ and $Z$ in $\calL_0$ to make the diophantine definition.

\autosyncline{802}Over $L = K(C)(\sqrt{f(A)}, \sqrt{f(B)})$, the curves $\calE_A$ and $E$ become isomorphic:
\begin{equation}\label{elllemma-isom}\begin{aligned}
	\theta: \calE_A(L) &\simto E(L) \\
	(X,Y) &\mapsto (X,Y\sqrt{f(A)})
.\end{aligned}\end{equation}
\autosyncline{807}Now we can diophantinely define the set of non-zero multiples of
$P_1 = (A, \sqrt{f(A)})$ on $E(L)$ by taking the multiples of $(A,1)$ on $\calE_A(L)$
and simply multiplying the $y$-coordinate by $\sqrt{f(A)}$.
Analogously, the set $\Z_0 \cdot P_2$ is diophantine,
which finishes the first point of the lemma.

\autosyncline{813}To prove \ref{elllemma-indep},
first of all note that both $P_1$ and $P_2$ have infinite order in $E(L)$
because of Theorem~\ref{denef}.
Assume we would have a relation $mP_1 = nP_2$ with $m \neq 0$ and $n \neq 0$.
Since the $x$-coordinate of $P_1$ is $A$,
\autosyncline{818}it follows from Section~\ref{sec-denef} that the $x$-coordinate of $mP_1$ equals $\chebX_m(A)$.
Similarly, the $x$-coordinate of $nP_2$ is $\chebX_n(B)$.
So, we have $\chebX_m((T\iinv + \lambda)Z) = \chebX_n(T\iinv + \lambda Z)$.
If we specialize the variable $Z$ to $T\inv$,
we get $\chebX_m(T^{-3} + \lambda T\inv) = \chebX_n(T^{-2} + \lambda T\inv)$.
\autosyncline{823}But it follows from Proposition~\ref{XYinfinity} that
$v(\chebX_m(T^{-3} + \lambda T\inv)) = -3$
and $v(\chebX_n(T^{-2} + \lambda T\inv)) = -2$.
This is a contradiction.

\autosyncline{828}Finally, let us prove point \ref{elllemma-deg}.
Assume that $\sqrt{f(A)}$ is in $\bar{K}(C)$.
Then the isomorphism $\theta$ in \eqref{elllemma-isom} would be defined over $K(C)$.
Since $E(\bar{K})$ contains $n$-torsion points for every $n$,
$\calE_A(\bar{K}(C))$ would also contain $n$-torsion points for every $n$.
\autosyncline{833}But by our construction, $\calE_A(\bar{K}(C))$
has only $2$-torsion points and points of infinite order.
Therefore, $[\bar{K}(C)(\sqrt{f(A)}):\bar{K}(C)] = 2$.

\autosyncline{837}Now assume that $\sqrt{f(B)} \in \bar{K}(C)(\sqrt{f(A)})$.
Then we can write $\sqrt{f(B)} = R + S\sqrt{f(A)}$ with $R$ and $S$ in $\bar{K}(C)$.
Squared, we get
$$
	f(B) = R^2 + S^2f(A) + 2RS \sqrt{f(A)} \in \bar{K}(C)
.$$
\autosyncline{843}But $\sqrt{f(A)}$ does not lie in $\bar{K}(C)$,
so we have two possibilities: either $R = 0$ or $S = 0$.
If $S = 0$, then $\sqrt{f(B)} \in \bar{K}(C)$, which we can exclude as in the previous paragraph.

\autosyncline{847}If $R = 0$, then $\sqrt{f(B)}$ is a $\bar{K}(C)$-multiple of $\sqrt{f(A)}$.
Then $(B, \sqrt{f(B)}/\sqrt{f(A)})$ would be a point on $\calE_A(\bar{K}(C))$.
This means that $2$ times this point is a multiple of $(A, 1)$.
Applying the isomorphism $\theta$,
we find that $2 \cdot P_2$ is a multiple of $P_1$,
\autosyncline{852}in contradiction with the independence of $P_1$ and $P_2$.
\end{proof}

\autosyncline{856}We have to make a technical remark about affine versus projective points.
We just defined $\Z_0 \cdot P_i$, the affine multiples of $P_i$.
However, we would also like to work with the point at infinity.
So we work with projective coordinates in $\PS[L]{2} = (L^3 \setminus \{0\})/L^*$.
The equivalence relation between different coordinates for the same point is clearly diophantine.
\autosyncline{861}Now $\Z \cdot P_i$ = $(0,1,0) \cup \set{(X,Y,1)}{(X,Y) \in \Z_0 \cdot P_i}$.

\subsection{The model of $\Z \times \Z$}\label{sec-ZxZ}

\autosyncline{867\\funcfield0}Consider the set $\Z \times \Z$ with the obvious addition $(a,b) + (c,d) = (a+c, b+d)$.
Define a binary relation $\mid$ on $\Z \times \Z$ which satisfies
\begin{equation}\label{ZxZ-mid}
	a \text{ odd }
		\implies
	(a,1) \mid (c,d)
		\leftrightarrow
	(\exists r \in \Z) \big((c,d) = r(a,1)\big)
		\leftrightarrow
	c = a d
.\end{equation}
\autosyncline{878}Note that the truth of $(a,b) \mid (c,d)$ with $a$ even or with $b \neq 1$ does not matter,
we can define $\mid$ as we wish for such arguments.

\autosyncline{881}If we embed $\Z$ into $\Z \times \Z$ by mapping $n$ to $(n,0)$,
then we can existentially define the addition and multiplication
on the image of $\Z$ in terms of the relations $+$ and $\mid$ on $\Z \times \Z$.
For the addition, this is obvious since $a + b = c$ is equivalent to $(a,0) + (b,0) = (c,0)$.
For the multiplication, we have:
\begin{prop}\label{ZxZmul}
\autosyncline{887}Let $a,b,c \in \Z$.
Then $a b = c$ if and only if there exists an $X \in \Z \times \Z$
such that the following relations are satisfied:
\begin{align}
	(1,1) &\mid X \label{ZxZmul-plus} \\
	(-1,1) &\mid (X - 2(b,0)) \label{ZxZmul-minus} \\
	(2(a,0) + (1,1)) &\mid (X + 2(c,0)) \label{ZxZmul-c}
\end{align}
\end{prop}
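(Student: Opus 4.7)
The plan is to observe that in each of the three divisibility conditions the first coordinate of the divisor is odd ($1$, $-1$, and $2a+1$ respectively), so the equivalence \eqref{ZxZ-mid} applies directly and converts each condition into a scalar equation. The entire proof is then a chain of linear substitutions.

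First I would write $X = (x_1, x_2) \in \Z \times \Z$. Condition \eqref{ZxZmul-plus}, namely $(1,1) \mid X$, unpacks via \eqref{ZxZ-mid} to $x_1 = 1 \cdot x_2$, i.e.\ $x_1 = x_2$; so $X = (r,r)$ for some $r \in \Z$. Substituting, $X - 2(b,0) = (r - 2b,\, r)$, and condition \eqref{ZxZmul-minus} becomes $r - 2b = (-1) \cdot r$, forcing $r = b$. Thus $X$ is uniquely determined as $(b,b)$. Finally, $X + 2(c,0) = (b + 2c,\, b)$ and $2(a,0) + (1,1) = (2a+1,\, 1)$, so condition \eqref{ZxZmul-c} reads $b + 2c = (2a+1)b$, which simplifies to $c = ab$. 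This proves the forward implication.

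For the converse, assuming $c = ab$ I would simply exhibit $X := (b,b)$ and verify the three conditions in turn using \eqref{ZxZ-mid}: $(1,1) \mid (b,b)$ since $b = 1 \cdot b$; $(-1,1) \mid (b - 2b, b) = (-b, b)$ since $-b = (-1) \cdot b$; and $(2a+1, 1) \mid (b + 2ab, b) = ((2a+1)b,\, b)$ since $(2a+1)b = (2a+1) \cdot b$.

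There is no substantive obstacle: the argument is a direct calculation and the divisibility relation $\mid$ was chosen precisely so that the first coordinate of each divisor used here is odd, putting us squarely in the ``controlled'' case of \eqref{ZxZ-mid}. The only thing worth highlighting in the write-up is that the three divisors $(1,1)$, $(-1,1)$, $(2a+1,1)$ together act as a system of equations that pins down $X$ and then reads off the product $ab$, which is exactly why this triple of conditions suffices to encode multiplication in terms of $+$ and $\mid$ alone.
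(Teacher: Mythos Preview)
Your proof is correct and follows essentially the same approach as the paper: both arguments note that the three divisors have odd first coordinate so that \eqref{ZxZ-mid} applies, exhibit $X=(b,b)$ for one direction, and for the other direction successively deduce $X=(r,r)$, then $r=b$, then $c=ab$. The only cosmetic difference is that you spell out the verification of the three conditions for $X=(b,b)$ in full, whereas the paper simply asserts it.
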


\begin{proof}
\autosyncline{898}First of all, note that these 3 relations are all of the form \eqref{ZxZ-mid}.
If $a b = c$, then $X = (b,b)$ satisfies the relations.
Conversely, if the relations are satisfied,
then $X$ must be of the form $(x,x)$ by \eqref{ZxZmul-plus}.
Now \eqref{ZxZmul-minus} says that $(-1,1) \mid (x-2b, x)$.
\autosyncline{903}This implies that $b = x$.
Using $X = (b,b)$, equation \eqref{ZxZmul-c}
becomes $(2a+1, 1) \mid (b+2c, b)$ which implies $b+2c = (2a+1)b$, hence $c = a b$.
\end{proof}

\autosyncline{908}We will apply this as follows:
as shown in section~\ref{sec-ellcurve}, we can diophantinely define the sets
$\Z \cdot P_1$ and $\Z \cdot P_2$, hence also $\Z \cdot P_1 + \Z \cdot P_2$
inside $E(L) \subseteq \PS[L]{2}$.

\autosyncline{913}We identify $\Z \cdot P_1 + \Z \cdot P_2$ with $\Z \times \Z$ via
$a P_1 + b P_2 \longleftrightarrow (a,b) \in \Z \times \Z$.
Then the addition on $\Z \times \Z$ corresponds to addition on the elliptic curve,
so it is diophantine.
In section~\ref{sec-divqf} we will show that also the relation $\mid$ is diophantine.
\autosyncline{918}This would show that the map $\Z \to E(L): n \mapsto n P_1$ is a diophantine model of $\Z$,
which is what we were asked to proved in Main Theorem~\ref{main-valued1}.

\subsection{The quadratic form}\label{sec-divqf}

\autosyncline{925\\funcfield0}The following theorem shows that the relation $\mid$ (see \eqref{ZxZ-mid})
on $\Z \cdot P_1 + \Z \cdot P_2 \cong \Z \times \Z$ is diophantine.

\begin{theorem}\label{divqf}
\autosyncline{929}Let $Q$ be a $2^q$-dimensional anisotropic Pfister form over $k$ with coefficients in $F$,
which exists by assumption.
Let $m,n,r \in \Z$ with $m$ odd.
Then $n = mr$ if and only if $nP_1 + rP_2 = \ellzero$ or
\begin{equation}\label{divqf-qf}
	\form{1, y(m P_1 + P_2)} \otimes \form{1, y(n P_1 + r P_2)} \otimes Q
\end{equation}
\autosyncline{936}is isotropic over $L$.
($y(P)$ stands for the $y$-coordinate of the point $P$.)
\end{theorem}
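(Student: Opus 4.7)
The plan is to prove both directions of the biconditional by extending the valuation $v$ from $K$ to $L$ and using Proposition~\ref{qfresidue} to translate between isotropy over $L$ and isotropy of a related form over the residue field. The form in question has dimension $2\cdot 2\cdot 2^q = 2^{q+2}$, precisely the dimension at which condition~(\ref{cond1-qf}) of Main Theorem~\ref{main-valued1} forces Pfister forms over finite extensions of $F(Z)$ to be isotropic, while $Q$ itself is designed to remain anisotropic over $k$ and, by Cassels--Pfister / Springer, over suitable extensions thereof.

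I would first fix an extension of $v$ to $K(C)$ as a composite valuation (Proposition~\ref{compval}), for instance by specializing $Z \mapsto T\inv$, so that $v(A) = -3v(T)$ is odd while $v(B) = -2v(T)$ is even in $\Gamma$. After extending to $L = K(C)(\sqrt{f(A)},\sqrt{f(B)})$, the residue field is a finite extension of $k(\bar Z)$, hence algebraic over $F(Z)$. Using Proposition~\ref{XYinfinity} together with the elliptic addition formula, I would then compute the valuations of $y(mP_1 + P_2)$ and $y(nP_1 + rP_2)$ in $\Gamma_L$ as explicit functions of $m, n, r$ and $v(T)$. The target of this calculation is that $v\bigl(y(mP_1+P_2)\cdot y(nP_1+rP_2)\bigr)$ lies in $2\Gamma_L$ precisely when $n = mr$.

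For the forward direction, assume $n = mr$ and $nP_1+rP_2 \neq \ellzero$. Writing $P := mP_1 + P_2$ gives $nP_1 + rP_2 = rP$. By Denef's parametrization (Theorem~\ref{denef}) applied to the quadratic twist $\calE_{x(P)}: f(x(P))Y^2 = f(X)$, one has $y(rP) = \chebY_r(x(P))\cdot y(P)$; combined with the Pfister identity $\form{1,a}\otimes\form{1,ab}\cong\form{1,a}\otimes\form{1,b}$, the form is Witt-equivalent to $\form{1,y(P)}\otimes\form{1,\chebY_r(x(P))}\otimes Q$. Given that the valuation computation shows these scalars lie in $2\Gamma_L$, one scales each Pfister slot by squares to obtain unit scalars; the residue is then a $2^{q+2}$-dimensional Pfister form over a finite extension of $F(Z)$, isotropic by condition~(\ref{cond1-qf}). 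The converse of Proposition~\ref{qfresidue} (the remark after Corollary~\ref{qfresidue-cor}) lifts isotropy to $L\hens$, and for Pfister forms this descends to $L$ via isotropy-equals-hyperbolicity together with injectivity of $W(L)\to W(L\hens)$ on Pfister classes.

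For the reverse direction, assume the form is isotropic over $L$ and $nP_1+rP_2 \neq \ellzero$, but $n \neq mr$. By the valuation computation, at least one of $v(y(mP_1+P_2))$, $v(y(nP_1+rP_2))$, or their sum is odd in $\Gamma_L$. Iteratively applying Proposition~\ref{qfresidue} peels off these $\form{1,\cdot}$ factors, reducing to isotropy of (a scalar multiple of) $Q$ over the residue field of $L$. But $Q$ has coefficients in $F$, $F$ embeds into the residue field with $k$ algebraic over $\pi(F)$ by Proposition~\ref{resnonhensel}, and anisotropy of $Q$ over $k$ propagates to the residue field by Cassels--Pfister, contradicting the assumed isotropy. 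The main obstacle is the explicit valuation analysis of $y(mP_1+P_2)$ and $y(nP_1+rP_2)$: establishing that the parity of the valuation of their product encodes exactly the condition $n = mr$ requires a careful case analysis via Proposition~\ref{XYinfinity} and the elliptic addition law; a secondary technicality is the Pfister-form descent from $L\hens$ to $L$ in the forward direction.
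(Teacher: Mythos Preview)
Your overall strategy is in the right spirit---both directions do ultimately rest on condition~(\ref{cond1-qf}) and on Corollary~\ref{qfresidue-cor}---but there are genuine gaps in each half that the paper handles quite differently.

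For the forward direction, the valuation-and-descent route you propose is both unnecessary and unjustified at the last step. There is no general reason why a Pfister form hyperbolic over $L\hens$ must be hyperbolic over $L$; injectivity of $W(L)\to W(L\hens)$ on Pfister classes is not available here. The paper avoids this entirely: writing $P_3 = mP_1+P_2$ and $nP_1+rP_2 = rP_3$, one has $y(rP_3) = \chebY_r(x(P_3))\,y(P_3)$ with $\chebY_r \in \Q(Z)$, so \emph{all} coefficients of the form \eqref{divqf-qf} already lie in the subfield $F(x(P_3),y(P_3)) \cong F(E)$ of $L$. This is a finite extension of $F(Z)$, so condition~(\ref{cond1-qf}) gives isotropy there directly, hence over $L$. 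No henselisation, no lifting.

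For the backward direction, the key idea you are missing is that the auxiliary valuation must depend on $m$. With a single fixed specialization such as $Z \mapsto T\inv$, there is no reason the valuation of $y(mP_1+P_2)$ should have the parity you need: generically $mP_1$ and $P_2$ reduce to distinct finite points and $y(P_3)$ has no controlled odd valuation (and the extension $L/K(C)$ may already ramify, disturbing parity in $\Gamma_L$). The paper instead passes to $K\hens$ and uses Hensel's Lemma to produce $\gamma \in K\hens$, with $\gamma \equiv m^2 \bmod \frakm$, at which $\chebX_m(A(\gamma)) = B(\gamma)$. Taking $w$ to be the discrete valuation at $Z=\gamma$ (extended to $M = L\otimes_K K\hens$ with odd ramification, and choosing the sign so that $m\tilde{P_1} = -\tilde{P_2}$), one forces $\tilde{P_3} = \ellzero$, whence $w(y(P_3)) = -3$ is odd, while $w(y(sP_1+rP_3)) = 0$ for $s = n - mr \neq 0$. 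One application of Corollary~\ref{qfresidue-cor} then drops to $\form{1,y(s\tilde{P_1})}\otimes Q$ over the residue field ${K\hens}'$; a second application, now using the original $v$ (for which $v(y(s\tilde{P_1})) = -3$ is again odd), drops to $Q$ over an odd-degree extension $k'/k$, and Springer's theorem yields the contradiction. This two-step structure---first an $m$-dependent discrete valuation $w$ engineered so that $P_3$ degenerates, then $v$---is the heart of the argument, and a single composite valuation does not reproduce it.
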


\begin{remark}
\autosyncline{941}A quadratic form being isotropic is a diophantine condition
if all the coefficients are diophantine.
Therefore, the coefficients of $Q$ must be elements of the field of coefficients $\calL_0$.
\end{remark}

\begin{proof}
\autosyncline{947}The statement clearly holds if $n = r = 0$.
For the rest of the proof, we assume this is not the case.

\autosyncline{950}Assume $n = mr$ and set $P_3 := m P_1 + P_2$.
Now \eqref{divqf-qf} becomes
\begin{equation}\label{qf2}
	\form{1, y(P_3)} \otimes \form{1, y(r P_3)} \otimes Q
.\end{equation}
\autosyncline{955}Since $y(r P_3) = \chebY_r(x(P_3)) y(P_3)$,
the coefficients of this quadratic form live in $L_0 := F(x(P_3), y(P_3))$.
This field is isomorphic to the function field of $E$ over $F$,
so we can use condition \eqref{cond1-qf} from the Theorem.
The Pfister form \eqref{qf2} is $2^{q+2}$-dimensional,
\autosyncline{960}therefore it is isotropic over $L_0 \subseteq L$.

\autosyncline{962}Conversely, assume that \eqref{divqf-qf} is isotropic over $L$.
Let $s := n - mr$ and suppose that $s \neq 0$ in order to find a contradiction.
Putting $P_3 := m P_1 + P_2$, we rewrite \eqref{divqf-qf} as
\begin{equation}\label{qf3}
	\form{1, y(P_3)} \otimes \form{1, y(s P_1 + r P_3)} \otimes Q
.\end{equation}

\autosyncline{969}For the rest of this proof, we take the henselisation $K\hens$ as a base field, instead of $K$.
Take any extension of the valuation $v$ to $K\hens$.
By abuse of notation, we will still write $v$ for this valuation.
This extension is immediate,
which means that the value group $\Gamma$ and the residue field $k$ remain the same.
\autosyncline{974}The henselisation is an algebraic extension,
and $K$ is relatively algebraically closed in $L$
(because $K(C)$ is a function field over $C$ and because of Lemma~\ref{elllemma}, item \ref{elllemma-deg}).
Define
$$
	M := L \otimes_K K\hens = K\hens(C)(\sqrt{f(A)}, \sqrt{f(B)})
.$$
\autosyncline{981}Since \eqref{qf3} is isotropic over $L$, it is certainly isotropic over $M$.
We just need the field $M$ for this proof,
we certainly do not need a diophantine model of $M$.

\autosyncline{985}Recall that $m$ is odd, in particular $m$ is non-zero.
The points $mP_1$ and $P_2$ have the following coordinates:
\begin{align}
	mP_1 &= \big(\chebX_m(A), \chebY_m(A)\sqrt{f(A)}\big),\\
	P_2 &= (B, \sqrt{f(B)}).
\end{align}

\autosyncline{992}Consider $H(Z) := \chebX_m(A) - B \in K\hens(Z)$,
we want to find a simple zero $\gamma \in K\hens$ of this rational function.
Write the rational function $\chebX_m(\xi) \in \Q(\xi)$ as $R_m(\xi)/S_m(\xi)$
with $R_m(\xi), S_m(\xi) \in \Q[\xi]$.
By Proposition~\ref{XYinfinity},
\autosyncline{997}we can choose these such that $R_m$ has leading term $\xi^d$
and $S_m$ has leading term $m^2 \xi^{d-1}$ for some $d$.
Keeping in mind that $A = (T\iinv + \lambda) Z$ and $B = T\iinv + \lambda Z$
with $\lambda \in \Q^*$,
the following is a polynomial in $Z$ with coefficients in $\Q[T] \subseteq \calO$:
\begin{equation}\label{g}
	G(Z) := T^{2d} S_m(A) H(Z) = T^{2d} R_m(A) - T^{2d} S_m(A) B
.\end{equation}

\autosyncline{1006}We would like to apply Hensel's Lemma to find a root of $G(Z)$ in $K\hens$.
Modulo $\frakm$ (the maximal ideal in $\calO$ containing $T$), we have
\begin{align*}
	G(Z) &\equiv (T^2 A)^d - m^2 (T^2 A)^{d-1} (T^2 B) \mod{\frakm} \\
		&\equiv Z^d - m^2 Z^{d-1} \mod{\frakm}
.\end{align*}
\autosyncline{1012}This equation has a simple zero $m^2 \mod{\frakm}$,
therefore Hensel's Lemma shows that
$G(Z)$ has a simple root $\gamma \in K\hens$ with $\gamma \equiv m^2 \mod{\frakm}$.

\autosyncline{1016}In order for $\gamma$ to be a zero of the rational function $H(Z) = T^{-2d} G(Z)/S_m(A)$,
it must not be a zero of $S_m(A) = S_m((T\iinv + \lambda) Z)$.
But $S_m$ has coefficients in $\Q$,
so the zeros of $S_m((T\iinv + \lambda) Z)$ are of the form
$\alpha/(T\iinv + \lambda)$ with $\alpha$ algebraic over $\Q$.
\autosyncline{1021}Since $\gamma$ has valuation zero, it is clearly not of this form.

\autosyncline{1023}Define $w$ as the discrete valuation on $K\hens(Z)$ at the point $Z = \gamma$.
This means that $w(Z - \gamma) = 1$ and that $w$ is trivial on $K\hens$.
Clearly, the residue field is $K\hens$.
We found $\gamma$ as a simple zero of $H(Z) = \chebX_m(A) - B$, therefore
\begin{equation}\label{xequiv}
	w(\chebX_m(A) - B) = 1
.\end{equation}

\autosyncline{1031}We defined $w$ as a valuation on $K\hens(Z)$,
but we would like to extend $w$ to the finite extension $M = K\hens(C)(\sqrt{f(A)}, \sqrt{f(B)})$.
We use the notation $\tilde{x}$ for the reduction of $x$ with respect to $w$,
this gives a map $K\hens[Z]_{(Z-\gamma)} \rightarrow K\hens$.
As we extend $w$ to a finite extension, we keep the same notation.

\autosyncline{1037}Since $[K\hens(C) : K\hens(Z)]$ is odd, it follows from Proposition~\ref{valuation-ext}
that we can extend $w$ to $K\hens(C)$ in such a way
that both the ramification index $e_w$ and the residue extension degree $f_w$ are odd.
Choose such an extension and write ${K\hens}'$ for the residue field
of this extended valuation.
\autosyncline{1042}The new value group is generated by $1/e_w$, we do not renormalize.
Since algebraic extensions of henselian fields are again henselian
(see \cite[Section~4.1]{engler-prestel}),
${K\hens}'$ is also henselian (for the extension of $v$ to ${K\hens}'$).

\autosyncline{1047}Now we still have to adjoin $\sqrt{f(A)}$ and $\sqrt{f(B)}$ to $K\hens(C)$.
Note that $\tilde{A} = (T\iinv + \lambda) \gamma$
and $\tilde{B} = T\iinv + \lambda \gamma$ with $\lambda \in \Q^*$
and $\gamma \equiv m^2 \mod{\frakm}$.
It follows that $T^6 f(\tilde{A}) \equiv m^6 \mod{\frakm}$
\autosyncline{1052}and also $T^6 f(\tilde{B}) \equiv 1 \mod{\frakm}$.
Hensel's Lemma implies that
$f(\tilde{A})$ and $f(\tilde{B})$ are squares in ${K\hens}'$.
After extending $w$ to $M = K\hens(C)(\sqrt{f(A)},\sqrt{f(B)})$,
the residue field remains ${K\hens}'$
\autosyncline{1057}and $w$ does not ramify in this extension $M/K\hens(C)$.

\autosyncline{1059}Equation \eqref{xequiv} implies that $m \tilde{P_1}$ and $\tilde{P_2}$
have the same $x$-coordinate (an element of $K\hens$).
This means that there are $2$ possibilities:
either they are the same point (equal $y$-coordinates),
or they are opposite points (opposite $y$-coordinates).
\autosyncline{1064}But $M$ has an involution $\sigma$ mapping $\sqrt{f(B)}$ to $-\sqrt{f(B)}$,
while fixing $K\hens(C)(\sqrt{f(A)})$ (this follows from Lemma \ref{elllemma}).
On the curve, $\sigma(P_1) = P_1$ but $\sigma(P_2) = -P_2$.
We want $m \tilde{P_1}$ and $\tilde{P_2}$ to be opposite points.
If this is not the case, replace $w$ by the valuation $w \circ \sigma$.
\autosyncline{1069}Then the points become opposite and
\begin{equation}\label{yequiv}
	w\left(\chebY_{m}(A) \sqrt{f(A)} - \sqrt{f(B)}\right) = 0
.\end{equation}

\autosyncline{1074}We will now determine $w(y(P_3))$ using the fact that $P_3 = mP_1 + P_2$.
We can do this with \eqref{xequiv} and \eqref{yequiv}.
The elliptic curve addition formula says that
\begin{align*}
	x(P_3) &= -a_2 -x(mP_1) -x(P_2) + \left(\frac{y(mP_1) - y(P_2)}{x(mP_1) - x(P_2)}\right)^{\!\!2} \\
		&= -\underbrace{a_2}_{w \geq 0} -\underbrace{\chebX_m(A)}_{w = 0}
			- \underbrace{B}_{w = 0}
			+ \underbrace{\left(\frac{\chebY_m(A) \sqrt{f(A)} - \sqrt{f(B)}}{\chebX_m(A) - B}\right)^{\!\!2}}_{w = 2(0 - 1) = -2}
.\end{align*}
\autosyncline{1083}We see that $w(x(P_3)) = -2$.
The elliptic curve equation $y(P_3)^2 = f(x(P_3))$ implies that $w(y(P_3)) = -3$.
This should indeed be negative because we already knew that $\tilde{P_3}$ is the point at infinity.

\autosyncline{1087}So far we determined the $w$-valuation of the coefficient $y(P_3)$ in the quadratic form \eqref{qf3}.
We claim that $w(y(s P_1 + r P_3)) = 0$.
If $w(y(s P_1 + r P_3)) < 0$,
then $s \tilde{P_1} + r \tilde{P_3} = s \tilde{P_1} = \ellzero$;
if $w(y(s P_1 + r P_3)) > 0$,
\autosyncline{1092}then the $y$-coordinate of $s \tilde{P_1} + r \tilde{P_3} = s \tilde{P_1}$ is zero,
hence $s \tilde{P_1}$ is $2$-torsion.
In any case, if $w(y(s P_1 + r P_3)) \neq 0$,
then $\tilde{P_1}$ is a torsion point on $E$ (here we need $s \neq 0$).
But $E$ has coefficients in $\Q$, hence all torsion is algebraic over $\Q$.
\autosyncline{1097}The $x$-coordinate of $\tilde{P_1}$ is $\tilde{A} = (T\iinv + \lambda)\gamma$ with $v(\tilde{A}) = -2$,
therefore $\tilde{A}$ cannot be algebraic over $\Q$
and $\tilde{P_1}$ cannot be torsion.

\autosyncline{1101}We conclude $w(y(P_3)) = -3$ and $w(y(s P_1 + r P_3)) = 0$.
We would like to apply Corollary~\ref{qfresidue-cor} on \eqref{qf3}.
This works because $-3$ is odd in the value group of $w$;
indeed the value group is $(1/e_w) \Z$ with $e_w$ odd.
So Corollary~\ref{qfresidue-cor} gives us that
\begin{equation}\label{qfl}
	\form{1, y(s \tilde{P_1})} \otimes Q
.\end{equation}
\autosyncline{1109}is isotropic over ${K\hens}'$.

\autosyncline{1111}Recall that $[{K\hens}':K\hens] = f_w$ is odd.
Since $K\hens$ is henselian,
the valuation $v$ on $K\hens$ can be extended to ${K\hens}'$
in a unique way.
This extension has ramification index $e_v$ and residue extension degree $f_v$
\autosyncline{1116}which satisfy $e_v f_v = [{K\hens}':K\hens] = f_w$,
therefore both $e_v$ and $f_v$ are odd.
Write $k'$ for the new residue field.

\autosyncline{1120}The point $\tilde{P_1}$ has $x$-coordinate $\tilde{A} = (T\iinv + \lambda)\gamma$
with $v(\tilde{A}) = -2$.
The $y$-coordinate of $s \tilde{P_1}$ equals $\chebY_s(\tilde{A}) \sqrt{f(\tilde{A})}$.
Proposition~\ref{XYinfinity} implies that $v(\chebY_s(\tilde{A})) = 0$;
hence $v(y(s \tilde{P_1})) = v(f(\tilde{A}))/2 = -3$.
\autosyncline{1125\\funcfield0}Since $e_v$ is odd, a similar reasoning as before implies that this $-3$ is an odd element
of the value group of $v$ on ${K\hens}'$.
We can apply Corollary~\ref{qfresidue-cor} on \eqref{qfl} to conclude
that $Q$ is isotropic over the residue field $k'$.
Since $[k':k] = f_v$ is odd and $Q$ has coefficients in $F \subseteq k$,
\autosyncline{1130}it follows from Springer's Theorem (see \cite[VII.2.7]{lam-qf}) that $Q$ is also isotropic over $k$.
But $Q$ was chosen to be anisotropic over $k$, so we have found a contradiction.
\end{proof}

\section{The conditions of the Main Theorem}\label{sec-valmain2}

\autosyncline{1138\\funcfield0}It turns out that we can simplify some of the conditions of Main Theorem~\ref{main-valued1}.
First of all, thanks to Voevodsky's work on the Milnor Conjectures
(see \cite{pfister-milnor} for a survey),
we can replace condition \eqref{cond1-qf} in Main Theorem~\ref{main-valued1}
by a simple condition on the $2$-cohomological dimensions
\autosyncline{1143}of $\gal(\bar{F}/F)$ and $\gal(\bar{k}/k)$.
Second, the condition that the curve $C$ has a rational point
can be easily removed by going to a finite extension of $K$.

\subsection{Galois Cohomology}\label{seccohom}

\autosyncline{1151\\funcfield0}We will recall some definitions and propositions from Galois cohomology,
we refer to \cite{serre-cohom} for background and proofs.

\autosyncline{1154}Throughout this section, $K$ will be a characteristic zero field.
Let $H^q(K, \mu_p)$ denote the $q$-th cohomology group
of the absolute Galois group $\gal(\bar{K}/K)$
with coefficients in the group $\mu_p \subset \bar{K}^*$ of $p$-th roots of unity.

\begin{define}
\autosyncline{1160}Let $p$ be a prime number.
The \emph{$p$-cohomological dimension} of $\gal(\bar{K}/K)$, denoted by $\cdim_p(K)$,
is the smallest integer $q$ such that
$$
	H^{q+1}(L, \mu_p) = 0 \qquad \text{for all finite extensions $L$ of $K$}
.$$
\autosyncline{1166}If there is no such $q$, then we define $\cdim_p(K) = \infty$.
\end{define}

\autosyncline{1169}Serre gives a different definition of $p$-cohomological dimension,
but ours is equivalent,
see the proof of \cite[II.\S\,2.3~Prop.~4]{serre-cohom}.

\autosyncline{1173}It turns out that we can describe
how these cohomological dimensions behave with respect to field extensions:
\begin{prop}[see {\cite[II.\S\,4.2~Prop.~11]{serre-cohom}}]\label{cohom-ext}
Let $K$ be a characteristic zero field with $\cdim_p(K) < \infty$,
and let $L$ be any extension of $K$.
\autosyncline{1178}Then
\begin{equation}
	\cdim_p(L) \leq \cdim_p(K) + \mathrm{tr.\:deg}(L/K)
.\end{equation}
If $L$ is finitely generated over $K$, the equality holds.
\autosyncline{1183}In particular, cohomological dimensions remain the same under finite extensions,
provided that $\cdim_p(K) < \infty$.
\end{prop}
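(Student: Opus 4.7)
The plan is to reduce to two building blocks: the behavior of $\cdim_p$ under algebraic extensions, and the behavior under adjoining a single transcendental. Given any extension $L/K$ of transcendence degree $n$, fix a transcendence basis $t_1,\dots,t_n$ and factor $K \subseteq K(t_1,\dots,t_n) \subseteq L$, where the second step is algebraic. If (a) algebraic extensions never increase $\cdim_p$ and (b) $\cdim_p(F(t)) \le \cdim_p(F) + 1$ for any $F$ of finite $p$-cohomological dimension, then iterating (b) $n$ times and applying (a) yields the upper bound.

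For (a), if $L/K$ is algebraic, then $\gal(\bar L/L)$ sits canonically as a closed subgroup of $\gal(\bar K/K)$, and the $p$-cohomological dimension of a closed subgroup of a profinite group is at most that of the ambient group (standard Shapiro/induced-module argument).

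For (b), I would invoke the Hochschild--Serre spectral sequence attached to the normal closed subgroup $H := \gal(\overline{K(t)}/\bar K(t))$ inside $G := \gal(\overline{K(t)}/K(t))$, whose quotient is $\gal(\bar K/K)$:
\begin{equation*}
E_2^{i,j} \;=\; H^i\!\bigl(\gal(\bar K/K),\, H^j(H, \mu_p)\bigr) \ \Longrightarrow\ H^{i+j}\bigl(K(t), \mu_p\bigr).
\end{equation*}
The crucial input is that $\bar K(t)$ has $p$-cohomological dimension at most $1$: this is Tsen's theorem, since $\bar K(t)$ is a $C_1$-field. Thus $E_2^{i,j}$ vanishes for $j \ge 2$, and combining with the bound on $i$ coming from $\cdim_p(K)$ yields $H^{q+1}(K(t), \mu_p) = 0$ once $q > \cdim_p(K) + 1$. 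Applying the same argument to every finite extension of $K(t)$, whose constant subfield remains a finite extension of $K$ (to which (a) applies), gives the required bound on $\cdim_p(K(t))$.

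For arbitrary, possibly non-finitely-generated $L$, I would pass to the limit: every Galois cohomology class over $L$ is defined over some finitely generated subextension, so $H^{q+1}(L,\mu_p) = \varinjlim H^{q+1}(L',\mu_p)$ taken over finitely generated $K \subseteq L' \subseteq L$, and each term vanishes as soon as $q > \cdim_p(K) + \mathrm{tr.\:deg}(L/K)$. The equality statement for finitely generated $L$ requires a matching lower bound: choose a nontrivial $\alpha \in H^{\cdim_p(K)}(K',\mu_p)$ for some finite extension $K'/K$, transport it to $K'(t_1,\dots,t_n)$, and cup with the classes of $t_1,\dots,t_n$ in $H^1(\mu_p)$; iterated residue maps along the valuations $t_i = 0$ show that the resulting class in $H^{\cdim_p(K)+n}$ is nonzero. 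I expect step (b), the single-variable case, to be the main obstacle, since it both relies on Tsen's theorem for $\bar K(t)$ and requires careful bookkeeping in the spectral sequence; the construction of the witness class in the equality statement is a secondary technical step using residue-map machinery.
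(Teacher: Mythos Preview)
The paper does not give its own proof of this proposition: it is simply quoted from Serre's \emph{Cohomologie Galoisienne} with the citation in the header, and no \texttt{proof} environment follows. Your sketch is essentially the argument Serre gives there---reduce to a purely transcendental step via a transcendence basis, handle the algebraic part by the closed-subgroup inequality, and prove $\cdim_p(F(t)) \le \cdim_p(F)+1$ via Hochschild--Serre together with Tsen's theorem for $\bar F(t)$; the lower bound in the finitely generated case comes from cupping with the Kummer classes of the $t_i$ and detecting nontriviality by residues.

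One small point worth tightening: in the spectral sequence step you need $H^i(\gal(\bar K/K), M)=0$ for $i>\cdim_p(K)$ with \emph{arbitrary} $p$-primary torsion coefficients $M$ (since $H^1(\bar K(t),\mu_p)$ is not just $\mu_p$), whereas the paper's working definition of $\cdim_p$ is phrased only in terms of $\mu_p$. The paper explicitly remarks that its definition is equivalent to Serre's (citing the proof of \cite[II.\S2.3~Prop.~4]{serre-cohom}), so this is fine, but you should invoke that equivalence when you write ``combining with the bound on $i$ coming from $\cdim_p(K)$''.
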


\autosyncline{1187}The Milnor Conjectures, now proven by Voevodsky and others,
provide a connection between the Witt ring $W(K)$
(an object used to study quadratic forms, see for example \cite[Chapter~II]{lam-qf})
and the Galois cohomology groups $H^q(K,\mu_2)$:
\begin{theorem}\label{milnor}
\autosyncline{1192}Let $I$ denote the fundamental ideal (generated by the $2$-dimensional forms) in $W(K)$.
Then $I^q/I^{q+1} \cong H^q(K, \mu_2)$.
\end{theorem}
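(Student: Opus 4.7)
Since this is the classical Milnor Conjecture on quadratic forms, now a theorem, the plan is essentially to describe the natural map realizing the isomorphism and to indicate where the deep parts of the proof can be found, rather than to reproduce Voevodsky's motivic machinery. The isomorphism is given by the map $e_q$ sending the class of a Pfister form $\form{1, a_1} \otimes \cdots \otimes \form{1, a_q}$ to the cup product $(a_1) \cup \cdots \cup (a_q) \in H^q(K, \mu_2)$, where each symbol $(a_i)$ is the image of $a_i$ in $H^1(K, \mu_2) \cong K^*/(K^*)^2$ under the Kummer isomorphism coming from the Kummer sequence $1 \to \mu_2 \to \bar{K}^* \xrightarrow{(\cdot)^2} \bar{K}^* \to 1$ (which is exact since $\fchar K = 0$).

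First I would invoke Pfister's structure theorem to say that the $q$-fold Pfister forms additively generate $I^q$, so that $e_q$ is determined by the formula above on generators. One then has to check $e_q$ is well-defined: the defining relations among Pfister forms (for example, $\form{1,a} \otimes \form{1,1-a} \equiv 0 \pmod{I^{q+1}}$ when $a \neq 0,1$) must translate into the corresponding Steinberg relations $(a) \cup (1-a) = 0$ in Galois cohomology, and forms in $I^{q+1}$ must map to zero. This verification is classical and essentially due to Milnor. Surjectivity of $e_q$ is then immediate from the fact that $H^q(K,\mu_2)$ is generated as an abelian group by cup products of degree-one symbols.

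The main obstacle, and the reason the statement was a conjecture for over three decades, is injectivity of $e_q$: if a form $q \in I^q$ has vanishing Milnor invariant modulo $I^{q+1}$, then already $q \in I^{q+1}$. The resolution of this requires the machinery of motivic cohomology, motivic Steenrod operations, and Rost's norm varieties; the quadratic form side was completed by Orlov, Vishik and Voevodsky, building on Voevodsky's proof of the Milnor conjecture on $K$-theory modulo $2$. Given the depth of this argument and the fact that only the statement is needed for the applications that follow, my plan is to cite the survey \cite{pfister-milnor} for the full proof.
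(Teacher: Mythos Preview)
Your proposal is correct and in fact more detailed than what the paper does: the paper gives no proof at all for this theorem, merely stating it as the Milnor Conjectures ``now proven by Voevodsky and others'' and pointing to the survey \cite{pfister-milnor}. Your sketch of the map $e_q$ via Pfister generators and Kummer symbols, together with the citation for injectivity, is entirely in the same spirit and would serve as a welcome elaboration of what the paper leaves implicit.
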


\autosyncline{1196}Using this, we know the possible dimensions of anisotropic Pfister forms over $K$:
\begin{cor}\label{milnor-cor}
There exists an an\-i\-so\-tro\-pic $2^q$-dimensional Pfister form over $K$
if and only if $H^q(K, \mu_2) \neq 0$.
\end{cor}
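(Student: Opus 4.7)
The plan is to deduce this corollary directly from Theorem~\ref{milnor} (the Milnor Conjecture in degree $q$), by pairing the isomorphism $I^q/I^{q+1} \cong H^q(K,\mu_2)$ with two classical facts from the theory of Pfister forms: (a) every Pfister form is either anisotropic or hyperbolic, and (b) the Arason--Pfister Hauptsatz, which states that any anisotropic form lying in $I^{q+1}$ has dimension at least $2^{q+1}$. I would also use the standard fact that $I^q/I^{q+1}$ is generated, as an abelian group, by the classes of $q$-fold Pfister forms (which is immediate from the description of $I$ as generated by forms $\form{1,-a}$, so that $I^q$ is generated as an ideal by products $\form{1,-a_1}\otimes\cdots\otimes\form{1,-a_q}$).

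For the forward direction, I would take an anisotropic $2^q$-dimensional Pfister form $\phi$ over $K$. Then $\phi \in I^q$, so it defines a class $[\phi] \in I^q/I^{q+1}$. I would then argue by contradiction: if $[\phi] = 0$ in $I^q/I^{q+1}$, then $\phi \in I^{q+1}$, and the Arason--Pfister Hauptsatz forces $\dim \phi \geq 2^{q+1}$, which contradicts $\dim \phi = 2^q$. Hence $[\phi]$ is nontrivial in $I^q/I^{q+1} \cong H^q(K,\mu_2)$, giving $H^q(K,\mu_2) \neq 0$.

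For the reverse direction, assume $H^q(K,\mu_2) \neq 0$, so $I^q/I^{q+1} \neq 0$. Since this quotient is generated by the classes of $q$-fold Pfister forms, some $q$-fold Pfister form $\phi$ has nonzero class in $I^q/I^{q+1}$. In particular $\phi$ is not hyperbolic (the hyperbolic form has zero Witt class), and by the dichotomy (a) above, $\phi$ must therefore be anisotropic. Since $\phi$ has dimension $2^q$ by construction, this produces the required anisotropic $2^q$-dimensional Pfister form.

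The main obstacle is really just the need to invoke the Arason--Pfister Hauptsatz and the anisotropic-or-hyperbolic dichotomy for Pfister forms; both are classical results from \cite{lam-qf} and involve no new computation. The rest of the argument is essentially a direct translation between the Milnor isomorphism and the filtration of the Witt ring.
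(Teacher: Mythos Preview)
Your proposal is correct and uses the same ingredients as the paper: the Milnor isomorphism $I^q/I^{q+1}\cong H^q(K,\mu_2)$, the Arason--Pfister Hauptsatz, and the anisotropic-or-hyperbolic dichotomy for Pfister forms. The only cosmetic difference is in the forward direction: the paper argues the contrapositive (from $H^q=0$ it gets $I^q=I^{q+1}=\cdots$, then invokes $\bigcap_n I^n=0$ to conclude $I^q=0$), whereas you apply the Hauptsatz directly to a single anisotropic $\phi$ to see $\phi\notin I^{q+1}$; your version is in fact slightly more direct.
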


\begin{proof}
\autosyncline{1203}If $H^q(K, \mu_2) = 0$, then $I^q/I^{q+1} = 0$.
This implies that $I^q = I^{q+1}$, hence also $I^{q+1} = I^{q+2}$ and so on.
The Arason--Pfister Hauptsatz (see \cite[X.5.1]{lam-qf})
implies that $\bigcap_{n \geq 0} I^n = 0$, therefore $I^q = 0$.
But $I^q$ is generated by the $2^q$-dimensional Pfister forms,
\autosyncline{1208}therefore all $2^q$-dimensional Pfister forms are hyperbolic (hence isotropic).

\autosyncline{1210}Conversely, if $H^q(K, \mu_2) \neq 0$, then $I^q \neq 0$.
Therefore, there exists a non-hyperbolic Pfister form $Q$ of dimension $2^q$.
But for Pfister forms, non-hyperbolic is the same as anisotropic.
\end{proof}

\autosyncline{1216}We can now change condition \eqref{cond1-qf} from Main Theorem~\ref{main-valued1}:

\begin{prop}\label{wlog-qf}
\autosyncline{1219}Main Theorem~\ref{main-valued1} is still true if we replace condition \eqref{cond1-qf} by:
``the $2$-cohomological dimensions of $F$ and $k$ are equal and finite.''
We can do this without loss of generality.
\end{prop}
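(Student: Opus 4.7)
Set $q := \cdim_2(F) = \cdim_2(k) < \infty$, which will serve as the integer required by condition \eqref{cond1-qf}. The isotropy half of (iii) is immediate: any finite extension $L$ of $F(Z)$ is finitely generated over $F$ of transcendence degree one, so by Proposition \ref{cohom-ext}, $\cdim_2(L) = q+1$. Hence $H^{q+2}(L, \mu_2) = 0$, and Corollary \ref{milnor-cor} gives that every $2^{q+2}$-dimensional Pfister form over $L$ is isotropic.

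For the anisotropic Pfister form half, Corollary \ref{milnor-cor} recasts the task as producing a class in $H^q(F, \mu_2)$ whose restriction to $H^q(k, \mu_2)$ (via the embedding $F \hookrightarrow k$ from Proposition \ref{resnonhensel}) is nonzero; the case $q = 0$ is trivial since $\form{1}$ is always anisotropic, so assume $q \geq 1$. The subtlety is that $\cdim_2(F) = q$ only guarantees nonvanishing of $H^q$ after a finite extension. The plan is to reduce to the desired nonvanishing over $F$ by passing to a carefully chosen finite unramified extension $K'/K$: pick a finite extension $k_1/k$ with $H^q(k_1, \mu_2) \neq 0$, choose a generator $\alpha$ of $k_1/k$ together with a monic lift of its minimal polynomial in $\calO[X]$, and adjoin a root $\tilde{\alpha}$ of the lifted polynomial selected inside the henselization $K\hens$. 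Then $K' := K(\tilde{\alpha}) \subseteq K\hens$ is a finite extension of $K$ with the same value group as $K$ and with residue field containing $k_1$. After replacing $K$ by $K'$ (and, if necessary, further lifting into the henselization so Proposition \ref{reshensel} provides a maximal subfield of the valuation ring isomorphic to the residue field), we arrange $H^q(F', \mu_2) \neq 0$ with nontrivial restriction to $H^q(k', \mu_2)$, producing the desired anisotropic Pfister form.

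The main obstacle is ensuring that this passage to $K'$ preserves the hypotheses of Main Theorem \ref{main-valued1} and that the resulting diophantine model descends back to $K(C)$. Characteristic zero is automatic; equality and finiteness of cohomological dimensions are preserved under finite extensions by Proposition \ref{cohom-ext}; and non-$2$-divisibility of the value group is maintained because $K' \subseteq K\hens$ is unramified by construction. The descent from $K'(C')$ to $K(C)$, where $C' := C \otimes_K K'$, follows by standard restriction of scalars: $K'(C')$ is a finite extension of $K(C)$ of degree $[K':K]$, so its elements encode as $[K':K]$-tuples over $K(C)$ via a fixed basis, with field operations translating into polynomial identities over $K(C)$, which yields a diophantine model of $\Z$ in $K(C)$ with coefficients in a suitably enlarged finitely generated subfield $\calL_0$.
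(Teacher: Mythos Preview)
Your overall strategy mirrors the paper's, but the construction of $K'$ contains a genuine error. You claim that a root $\tilde\alpha$ of the lifted polynomial $P(X)\in\calO[X]$ can be selected inside $K\hens$, so that $K' = K(\tilde\alpha) \subseteq K\hens$. This is impossible whenever $k_1 \neq k$: the henselisation is an immediate extension, so its residue field is still $k$; any root of the monic $P$ lying in $\calO\hens$ would reduce to a root of $\bar P = \bar p$ in $k$, but $\bar p$ is the minimal polynomial of $\alpha$ over $k$ and has no root there. Hence your justification that the value group is preserved (``$K' \subseteq K\hens$ is unramified by construction'') collapses, and so does the claim that the residue field of $K'$ contains $k_1$. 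The paper handles this step differently, invoking \cite[Theorem~(27.1)]{endler} to produce a finite extension $K_1/K$ with residue field exactly $k_1$ and value group still $\Gamma$. (One can also repair your argument without Endler: for any root $\tilde\alpha$ of $P$ in $\bar K$ and any extension of $v$ to $K(\tilde\alpha)$, the residue of $\tilde\alpha$ is a root of $\bar p$, so the residue degree satisfies $f \geq [k_1:k] = \deg P \geq [K(\tilde\alpha):K]$, forcing $e=1$ by Proposition~\ref{valuation-ext}; but this root does not live in $K\hens$.)

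Two smaller gaps remain. First, ``further lifting into the henselisation so Proposition~\ref{reshensel} provides a maximal subfield isomorphic to the residue field'' is too loose: passing all the way to $K_1\hens$ is in general an infinite extension and would block the descent to $K(C)$. The paper is more careful here: having fixed an anisotropic $2^q$-dimensional Pfister form $Q$ over $k_1$, it notes that the coefficients of $Q$ generate a \emph{finite} extension $F_1 = F(\beta)$ inside $k_1$, embeds $k_1$ into $K_1\hens$ via Proposition~\ref{reshensel}, and sets $K' = K_1(\beta) \subseteq K_1\hens$. This $K'$ is finite over $K$, still has residue field $k_1$ and value group $\Gamma$, and its maximal subfield $F' \supseteq F_1$ now contains the coefficients of $Q$. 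Second, you omit the ``without loss of generality'' direction entirely: one must also show that the original condition~(\ref{cond1-qf}) implies $\cdim_2(F)=\cdim_2(k)<\infty$, which the paper does in its final paragraph using Corollary~\ref{milnor-cor} and Proposition~\ref{cohom-ext}.
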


\autosyncline{1224}Note that this does \emph{not} mean that condition \eqref{cond1-qf} from the Main Theorem
is equivalent to ``$\cdim_2(F) = \cdim_2(k) < \infty$'',
it just means that we can also prove the Main Theorem
with the new condition instead of \eqref{cond1-qf}.
When we say ``without loss of generality'',
\autosyncline{1229}it means that ``$\cdim_2(F) = \cdim_2(k) < \infty$''
always holds if \eqref{cond1-qf} is satisfied.
We might need to extend the field $\calL_0$ though.

\begin{proof}
\autosyncline{1234}Assume $q := \cdim_2(F) = \cdim_2(k)$ is finite
and that conditions~(\ref{cond1-char}) and (\ref{cond1-gamma}) are satisfied.
By definition of cohomological dimension,
there is a finite extension $k_1/k$ for which $H^q(k_1, \mu_2) \neq 0$.

\begin{figure}
$$\xymatrix@!C@!R{
	& & K' = K_1(\beta) \\
	& K_1 = K(\alpha) \ar@{-}[ur] & \calO' \ar@{-}[u] \ar@{->>}[r] & k' \\
	K \ar@{-}[ur] & \calO_1 \ar@{-}[u] \ar@{-}[ur] \ar@{->>}[r] & k_1 \ar@{=}[ur] & F' \ar@(l,d)@{-}[ul]|(.583)\hole \ar@{-}[u] \\
	\calO \ar@{-}[u] \ar@{-}[ur] \ar@{->>}[r] & k \ar@{-}[ur] & F_1 = F(\beta) \ar@(l,d)@{-}[ul]|(.583)\hole \ar@{-}[u] \ar@{-}[ur] \\
	& F \ar@(l,d)@{-}[ul] \ar@{-}[u] \ar@{-}[ur]
}$$
\end{figure}

\autosyncline{1249}By \cite[Theorem~(27.1)]{endler},
we can find an extension $K_1/K$ such that $v$ extended to $K_1$
has residue field $k_1$ and value group $\Gamma$.
Choose $\alpha$ in the algebraic closure $\bar{K}$
such that $K_1 = K(\alpha)$.

\autosyncline{1255}Since $H^q(k_1, \mu_2) \neq 0$,
Corollary \ref{milnor-cor} implies that there exists
an anisotropic $2^q$-dimensional Pfister form $Q$ over $k_1$.
The coefficients of $Q$ are algebraic over $F$,
since $k_1/k$ and $k/F$ are algebraic extensions.

\autosyncline{1261}Let $F_1 \subseteq k_1$ be the field obtained by adjoining the coefficients of $Q$ to $F$.
Choose $\beta \in F_1$ such that $F_1 = F(\beta)$.
By Proposition~\ref{reshensel}, we can identify $k_1$
with a subfield of the henselisation $K_1\hens$.
So we have the following chain of field extensions: $F \subseteq F_1 \subseteq k_1 \subseteq K_1\hens$.
\autosyncline{1266}Therefore, we can see $\beta$ as an element of $K_1\hens$ and
define $K' := K_1(\beta)$.
Since $K'$ is a subfield of $K_1\hens$,
the residue field $k' := k_1$ and value group $\Gamma$
will remain the same if we take an extension of $v$ to $K'$.
\autosyncline{1271}Let $F' \supseteq F_1$ be a maximal subfield of $K'$ on which $v$ is trivial.

\autosyncline{1273}We claim that the conditions of Main Theorem~\ref{main-valued1} are satisfied for $K'$,
with maximal subfield $F'$ and residue field $k'$.
The residue field still has characteristic zero
and the value group stayed the same,
so conditions~(\ref{cond1-char}) and (\ref{cond1-gamma}) are still satisfied.

\autosyncline{1279}We have the quadratic form $Q$ which is anisotropic over $k' = k_1$.
We made sure that the coefficients of $Q$ lie in $F_1 \subseteq F'$, by adjoining them.

\autosyncline{1282}By construction, $k'$ is a finite extension of $k$,
so we have $\cdim_2(F) = \cdim_2(k') = q$.
Since $k'/F'$ and $F'/F$ are algebraic, we must also have $\cdim_2(F') = q$.

\autosyncline{1286}On the other hand,
from $\cdim_2(F') = q$ it follows that $\cdim_2(F'(Z)) = q+1$.
By definition of cohomological dimension,
we have $H^{q+2}(L, \mu_2) = 0$ for all finite extensions $L$ of $F'(Z)$,
which implies that all Pfister forms over $L$ of dimension $2^{q+2}$ will be isotropic.

\autosyncline{1292}Using Main Theorem~\ref{main-valued1}, this would prove undecidability for $K'(C)$.
However, $[K':K]$ is finite, therefore one can make a model of $K'(C)$ in $K(C)^{[K':K]}$.
So undecidability for the finite extension $K'(C)$ implies undecidability for $K(C)$.

\autosyncline{1296}Conversely, suppose that condition \eqref{cond1-qf} holds.
The second part of this condition says that $H^{q+2}(L, \mu_2) = 0$
for all finite extentions $L$ of $F(Z)$.
This implies $\cdim_2(F(Z)) \leq q + 1$,
and Proposition \ref{cohom-ext} gives $\cdim_2(F) = \cdim_2(F(Z)) - 1 \leq q$.

\autosyncline{1302}The existence of an anisotropic $2^q$-dimensional Pfister form over $k$
implies that $H^q(k, \mu_2) \neq 0$ and $\cdim_2(k) \geq q$.
But $k$ is algebraic over $F$, so by Proposition~\ref{cohom-ext} we have the inequalities
$$
	q \leq \cdim_2(k) \leq \cdim_2(F) \leq q
$$
\autosyncline{1308}which imply $\cdim_2(F) = \cdim_2(k) = q$, hence finite.
\end{proof}

\autosyncline{1311}Note that the inequality ``$\cdim_2(F) \geq \cdim_2(k)$'' is always satisfied,
because $k$ is an algebraic extension of $F$ (see Proposition~\ref{resnonhensel}).
So, it suffices to check that $\cdim_2(F) \leq \cdim_2(k)$.

\subsection{The curve $C$}\label{sec-curve}

\autosyncline{1319\\funcfield0}In Main Theorem~\ref{main-valued1}, we assumed that $C$ had a rational point.
But we can easily get rid of this condition using field extensions.

\begin{prop}\label{wlog-curve}
\autosyncline{1323}The conclusion of Main Theorem~\ref{main-valued1}
still holds if $C$ does not have a $K$-rational point.
\end{prop}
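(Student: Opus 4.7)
I would reduce to the already-proven case by passing to a finite extension of $K$ over which $C$ acquires a rational point, applying Main Theorem~\ref{main-valued1} there, and then pulling the diophantine model back down to $K(C)$.

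First, choose any closed point of $C$ and let $K'$ be its residue field; this is a finite extension of $K$ such that the base change $C_{K'} := C \times_K \Spec(K')$ is a smooth projective geometrically connected curve over $K'$ (geometric connectedness is preserved under base change), and it has a $K'$-rational point by construction. Extend $v$ to a valuation $v'$ on $K'$ with residue field $k'$ and value group $\Gamma'$; by Proposition~\ref{valuation-ext} both $[k':k]$ and $e := [\Gamma':\Gamma]$ are finite.

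Next I would check that $K'$ satisfies the hypotheses of the Main Theorem in the reformulation given by Proposition~\ref{wlog-qf}. The residue characteristic remains zero since $k'/k$ is algebraic. Choosing (via Proposition~\ref{resnonhensel}) a maximal subfield $F' \subseteq \calO'$ containing $F$, the cohomological condition $\cdim_2(F') = \cdim_2(k') < \infty$ follows from Proposition~\ref{cohom-ext} applied to the algebraic extensions $F'/F$ and $k'/k$. The one non-trivial point is non-$2$-divisibility of $\Gamma'$, which I would establish via the following general fact: if $\Gamma \subseteq \Gamma'$ are torsion-free $\Z$-modules with $[\Gamma':\Gamma] = e$ finite and $\Gamma$ is not $2$-divisible, then neither is $\Gamma'$. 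To prove this, pick $g \in \Gamma \setminus 2\Gamma$ and assume for contradiction that $\Gamma' = 2\Gamma'$. Iterating, one writes $g = 2^n h_n$ with $h_n \in \Gamma'$ for all $n$. Since $e h_n \in \Gamma$, we get $eg \in 2^n \Gamma$ for all $n$. Writing $e = 2^a e'$ with $e'$ odd and taking $n = a+1$, torsion-freeness yields $e' g \in 2\Gamma$; a B\'ezout relation $\alpha e' + 2\beta = 1$ then gives $g = \alpha(e' g) + 2\beta g \in 2\Gamma$, a contradiction.

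Having verified the hypotheses, Main Theorem~\ref{main-valued1} applied to $K'$ and $C_{K'}$ produces a diophantine model of $\Z$ over $K'(C_{K'})$ with coefficients in some finitely generated $\calL_0' \subseteq K'(C_{K'})$. Since $C$ is geometrically integral, $K'(C_{K'}) \cong K(C) \otimes_K K'$ is a field extension of $K(C)$ of degree $[K':K]$, so the standard construction (representing each element by its coordinates in a $K(C)$-basis) gives a diophantine model of $K'(C_{K'})$ inside $K(C)^{[K':K]}$. Composing the two models produces the desired diophantine model of $\Z$ over $K(C)$ with coefficients in some finitely generated subfield $\calL_0 \subseteq K(C)$. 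The main obstacle here is the small structural lemma about preservation of non-$2$-divisibility of the value group under finite extensions; the remaining steps are bookkeeping and standard constructions already used in the proof of Proposition~\ref{wlog-qf}.
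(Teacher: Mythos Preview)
Your proposal is correct and follows essentially the same route as the paper: pass to a finite extension $K'/K$ over which $C$ acquires a rational point, verify that the hypotheses of the Main Theorem (in the cohomological reformulation of Proposition~\ref{wlog-qf}) persist over $K'$, and then descend the diophantine model from $K'(C)$ to $K(C)$ via the standard finite-extension trick. The only difference is that you spell out in detail the lemma that non-$2$-divisibility of the value group is preserved under finite-index extensions, which the paper simply asserts in one line.
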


\begin{proof}
\autosyncline{1328}We use the formulation of condition \eqref{cond1-qf} as in Proposition~\ref{wlog-qf},
so we assume that $\cdim_2(F) = \cdim_2(k) < \infty$.

\autosyncline{1331}Over an algebraically closed field, $C$ must have a point
so let $P \in C(\bar{K})$.
Then $P$ is actually defined over a finite extension $K'$ of $K$.
Take an extension of $v$ to $K'$ and let $\Gamma'$ denote the new value group,
$k'$ the residue field and $F'$ a maximal subfield of $K'$ extending $F$.

\autosyncline{1337}We will now apply Main Theorem~\ref{main-valued1} for $K'$.
The value group $\Gamma'$ cannot be $2$-divisible
since $[\Gamma':\Gamma]$ is finite.
Since all extensions are finite,
$\cdim_2(F') = \cdim_2(F)$ and $\cdim_2(k') = \cdim_2(k)$,
\autosyncline{1342}therefore $\cdim_2(F') = \cdim_2(k') < \infty$, proving the new condition~\eqref{cond1-qf}.
Now $P$ is a $K'$-rational point,
so Main Theorem~\ref{main-valued1} gives undecidability for $K'(C)$, hence also for $K(C)$.
\end{proof}

\subsection{Second version of the Main Theorem}

\autosyncline{1351\\funcfield0}Applying the previous two sections,
we can reformulate Main Theorem~\ref{main-valued1} as follows:

\begin{mtheorem}\label{main-valued2}
\autosyncline{1355}Let $K$ be a field of characteristic zero with a valuation $v: K^* \onto \Gamma$.
Let $\calO$ denote the valuation ring and $k$ the residue field.

\autosyncline{1358}Assume the following conditions are satisfied:
\begin{enumerate}[(i)]
\item\label{cond-char} The characteristic of the residue field $k$ is zero.
\item\label{cond-gamma} The value group $\Gamma$ is not $2$-divisible.
\item\label{cond-qf}
	\autosyncline{1363}Let $F$ be a maximal field contained in $\calO$.
	The $2$-cohomological dimensions of $F$ and $k$ are equal and finite.
\end{enumerate}

\autosyncline{1367}Let $C$ be a smooth projective geometrically connected curve defined over $K$
and let $K(C)$ be its function field.
Then there exists a diophantine model of $\Z$ over $K(C)$
with coefficients in some finitely generated subfield $\calL_0$ of $K(C)$.
\end{mtheorem}

\section{Coefficient field}\label{sec-vallang}

\autosyncline{1377\\funcfield0}So far, we have not really discussed the field $\calL_0$ of coefficients
for which we have undecidability of diophantine equations.
We start from $\Q$ and add some constant symbols
to make our diophantine model of $\Z$.
There are four places in the proof where we need to enlarge $\calL_0$:

\begin{enumerate}
\item
\autosyncline{1385}To define the extension $L$ and the points $P_1$ and $P_2$ on $E(L)$,
$\calL_0$ must at least contain $T$ and $Z$.
For $T$ any element from $K$ having positive odd valuation will do,
$Z$ is simply a transcendental element over $K$ generating $K(Z)$.

\item
\autosyncline{1391}To apply Proposition~\ref{wlog-qf},
we might need to extend our field $K$ to a finite extension $K' = K(\alpha, \beta)$.
So we need the coefficients of the minimal polynomial of $\alpha$ and $\beta$ in $\calL_0$.
From the proof of Proposition~\ref{wlog-qf},
it can be seen that these are algebraic over $F$.
\autosyncline{1396}So, if $F$ happens to be finitely generated over $\Q$,
we might as well include all of $F$ into $\calL_0$.

\item
\autosyncline{1400}We have to express the coefficients of the quadratic form $Q$.
These will also be algebraic over $F$.

\item
\autosyncline{1404}Finally, we might need a finite extension to apply Proposition~\ref{wlog-curve}.
\end{enumerate}

\autosyncline{1407}In concrete examples, one can usually specify the field $\calL_0$ explicitly,
see some of the examples below.

\section{Examples}\label{sec-examples}

\autosyncline{1414\\funcfield0}In this section we give some examples for which our theorem can be applied.
We recover many known results.

\autosyncline{1417}The first example shows that we might as well take function fields
of arbitrary varieties (of dimension $\geq 1$) instead of curves.

\begin{example}\label{example-fg}
\autosyncline{1421}Let $K$ be such that the conditions of Main Theorem~\ref{main-valued2} are satisfied
for some curve $C$.
Let $L$ be a finitely generated extension of $K$,
with transcendence degree at least $1$.
Then HTP for $L$ has a negative answer
(\autosyncline{1426}for some finitely generated field $\calL_0$).
\end{example}

\begin{proof}
\autosyncline{1430}We consider two cases, according to the transcendence degree of $L/K$.

\autosyncline{1432}If the transcendence degree is exactly $1$
then we let $K'$ be the algebraic closure of $K$ inside $L$.
Then $L$ is the function field of a curve over $K'$,
let $L = K'(C')$.

\autosyncline{1437}Let $v$ be an extension of the given valuation to $K'$.
The new value group $\Gamma'$ might be larger than the original $\Gamma$,
but in any case $[\Gamma':\Gamma]$ is finite,
so $\Gamma'$ will still be non-$2$-divisible.

\autosyncline{1442}The maximal subfield $F' \supseteq F$ of $\calO' \subseteq K'$
will be a finite extension of $F$, so $\cdim_2(F') = \cdim_2(F)$.
The same is true for the new residue field $k'$,
so $\cdim_2(F') = \cdim_2(k') < \infty$.

\autosyncline{1447}If $L$ has transdendence degree $\geq 2$ over $K$,
then we take a transcendence basis $\{Z_1, \dots, Z_n\}$ of $L/K$.
Let $u$ be a valuation on $K(Z_1, \dots, Z_{n-1})$ with residue field $K$.
Let $v$ be the given valuation on $K$.
Let $w$ be the composition of $u$ with $v$
(\autosyncline{1452}see Proposition~\ref{compval} but with $u$ and $v$ swapped).
We want to show that the conditions of Main Theorem~\ref{main-valued2}
are satisfied for the base field $K(Z_1, \dots, Z_{n-1})$ with valuation $w$
and the curve $C = \PS{1}$.
Then the statement for $L$ will follow from the first part of this proof.

\autosyncline{1458}It is easy to see that $F \subseteq \calO_v \subseteq K$ is also a maximal subfield of $\calO_w$.
Proposition~\ref{compval} says that the residue field of $w$ is $k$.
So, clearly conditions (\ref{cond-char}) and (\ref{cond-qf}) are satisfied.
Also condition (\ref{cond-gamma}) is satisfied because of the exact sequence \eqref{compval-seq}
and the fact that $\Gamma_u$ is not $2$-divisible.
\end{proof}

\autosyncline{1465}To simplify the following examples,
we will only consider rational function fields.
However, because of the preceding example, everything still works
for function fields of varieties.
Moreover, considering only rational function fields makes
\autosyncline{1470}the examples more concrete such that one can specify $\calL_0$ in certain cases.

\begin{example}\label{exFZ1Z2}
\autosyncline{1473}If $F$ is a characteristic zero field with $\cdim_2(F)$ finite,
then HTP for the $2$-variable rational function field $F(Z_1, Z_2)$ has a negative answer.
\end{example}

\begin{proof}
\autosyncline{1478}Apply the theorem with $K = F(Z_1)$ and $v$ the discrete valuation associated to $Z_1$,
which has residue field $F$.
\end{proof}

\autosyncline{1482}Applying Example~\ref{example-fg}, this last example can be generalized
to function fields of varieties of dimension at least $2$ over $F$.

\begin{example}
\autosyncline{1486}If $F$ is a number field,
then HTP for $F(Z_1, Z_2)$ has a negative answer
with $\calL_0 = \Q(Z_1, Z_2)$.
(see also \cite{kim-roush-padic}).
\end{example}

\begin{proof}
\autosyncline{1493}From the Theorem of Hasse--Minkowski it follows that
all $4$-dimensional quadratic forms over a non-real
number field are isotropic.
On the other hand, over a real field there are anisotropic Pfister forms
of arbitrarily high dimension: take $\form{1,1} \otimes \form{1,1} \otimes \ldots$.
\autosyncline{1498}Using the results mentioned in Section \ref{seccohom},
this implies that $\cdim_2(F) = \infty$ if $F$ is a real number field
and $\cdim_2(F) = 2$ otherwise.
So in the non-real case we just have to apply Example~\ref{exFZ1Z2}.

\autosyncline{1503}If $F$ is real, we can take the finite extension $F' = F(\sqrt{-1})$.
Then Main Theorem~\ref{main-valued2} gives undecidability for $F'(Z_1, Z_2)$,
which implies undecidability for $F(Z_1, Z_2)$.
\end{proof}

\begin{example}
\autosyncline{1509}HTP for $\R(Z_1, Z_2)$ and $\C(Z_1, Z_2)$ has a negative answer
with $\calL_0 = \Q(Z_1, Z_2)$.
(for $\R$ see also \cite{denef-real},
for $\C$ see also \cite{kim-roush-ct1t2}).
\end{example}

\begin{example}
\autosyncline{1516}Let $F$ be a characteristic zero field with $\cdim_2(F)$ finite.
Then HTP for $F((T))(Z)$ has a negative answer.
\end{example}

\begin{proof}
\autosyncline{1521}Let $K = F((T))$ and let $v$ be the discrete valuation at $T$.
The valuation ring $\calO = F[[T]]$ has $F$ as maximal subfield.
This way, the conditions for Main Theorem~\ref{main-valued2} are satisfied.
\end{proof}

\autosyncline{1526}This example can be generalized somewhat:

\begin{example}\label{ex-completion}
\autosyncline{1529}Let $K$ be a field for which the conditions of Main Theorem~\ref{main-valued2} are satisfied.
Let $K'$ be any extension of $K$, contained in the maximal completion $\hat{K}$
(for discrete valuations, this is ``the'' completion).
Then HTP for $K'(Z)$ has a negative answer.
\end{example}

\begin{proof}
\autosyncline{1536}Extend the given valuation $v$ to a valuation on $K'$.
The residue field and value group will remain the same
($\hat{K}$ is the maximal field with this property).
In general, the maximal subfield $F'$ of $\calO'$ could be an extension of $F$,
but still contained in $k$.
\autosyncline{1541}Since $F \subseteq F' \subseteq k$ and $k/F$ is algebraic,
the extensions $k/F'$ and $F'/F$ are also algebraic.
Hence
$$
	q = \cdim_2(k) \leq \cdim_2(F') \leq \cdim_2(F) = q
$$
\autosyncline{1547}from which $\cdim_2(F') = \cdim_2(k) = q$.
\end{proof}

\begin{example}
\autosyncline{1551}If $K$ is henselian, then we have $\cdim_2(F) = \cdim_2(k)$ by Proposition~\ref{reshensel}.
We still need to check the finiteness of $\cdim_2(k)$ though.
\end{example}

\begin{example}\label{ex-trans}
\autosyncline{1556}Let $F$ be a characteristic zero field for which $\cdim_2(F)$ is finite.
Let $\{X_i\}_{i \in I}$ be a set of algebraically independent variables,
with $\# I \geq 2$.
Then HTP for $F(\{X_i\}_{i \in I})$ has a negative answer.
\end{example}

\begin{proof}
\autosyncline{1563}Choose a well-ordering $\preccurlyeq$ on $I$, this is a total order on $I$ such that
every non-empty subset of $I$ has a minimal element
(the existence of well-orderings is equivalent to the axiom of choice).
$I$ itself also has a smallest element $i_0$, let $Z := X_{i_0}$.
We also define $I' := I \setminus \{i_0\}$ and $K := F(\{X_i\}_{i \in I'})$.
\autosyncline{1568}We have to prove undecidability for $F(\{X_i\}_{i \in I}) = K(Z)$.

\autosyncline{1570}Let
$$
	\Gamma := \bigoplus_{i \in I'} \Z. \qquad \text{(direct sum of abelian groups)}
$$
Since $\# I \geq 2$, this $\Gamma$ is not $2$-divisible.

\autosyncline{1576}We make this into an ordered abelian group $\Gamma, +, \leq$
by using the lexicographic ordering coming from $I, \preccurlyeq$.
In detail:
let $\gamma = \oplus_{i \in I'} \gamma_i \in \Gamma$.
Assume $\gamma \neq 0$ and look at the set
$J \subseteq I'$ \autosyncline{1581}of all $i$ such that $\gamma_i \neq 0$.
Let $j_0$ be the minimal element from $J$,
and define $0 < \gamma$ if and only if $0 < \gamma_{j_0}$.

\autosyncline{1585}To define a valuation $v: K^* \onto \Gamma$,
we let $v$ be trivial on $F$ and define $v$ for monomials:
$$
	v\left(\prod_{i \in I'} X_i^{m_i}\right) = \bigoplus_{i \in I'} m_i \in \Gamma
.$$
\autosyncline{1590}Then the valuation of a polynomial is defined to be the minimal valuation of its terms.
Finally, for rational functions we define $v(x/y) = v(x) - v(y)$.
One can check that this does indeed satisfy the axioms of a valuation,
and that the residue field is $F$ (hence $\cdim_2(k) = \cdim_2(F) < \infty$).
\end{proof}

\begin{example}
\autosyncline{1597}Let $K$ be a field of characteristic zero containing an algebraically closed subfield.
If $K$ admits a valuation with non-$2$-divisible value group
and residue characteristic zero,
then HTP for $K(Z)$ has a negative answer with $\calR_0 = \Q(T,Z)$,
where $T$ can be any element with odd valuation.
\end{example}

\begin{proof}
\autosyncline{1605}Remark that $K$ cannot be algebraically closed itself,
because all valuations on algebraically closed fields
have divisible value groups.

\autosyncline{1609}Write $v$ for the given valuation with value group $\Gamma_v$,
valuation ring $\calO_v$,
maximal subfield $F_v \subseteq \calO_v$
and residue field $k_v$.
Let $C$ be an algebraically closed subfield of $F_v$
(\autosyncline{1614\\funcfield0}one can always take $C = \bar{\Q}$,
since $\bar{\Q}$ has no non-trivial valuations with residue characteristic zero).

$C$ \autosyncline{1617}is contained in $F_v$, so it is also contained in $k_v$.
We would like to define a valuation $u$ on $k_v$ with $C$ as residue field,
we do this as follows:
Choose a transcendence basis $\{X_i\}_{i \in I}$ for $k_v$ over $C$.
As in Example~\ref{ex-trans}, we can construct a valuation $u$ on $C(\{X_i\}_{i \in I})$
\autosyncline{1622}with residue field $C$.
Extend this valuation to $k_v$.
This extension is algebraic, so the new residue field is an algebraic extension of $C$,
hence $C$ itself.

\autosyncline{1627}Let $w$ be the composite valuation of $v$ and $u$,
as defined in Proposition~\ref{compval}.
We would like the apply the Main Theorem on $K$ with valuation $w$.
Since $\Gb_v$ is not $2$-divisible,
the exact sequence \eqref{compval-seq} ensures that $\Gb_w$ is not $2$-divisible either.

\autosyncline{1633}We claim that $C$ is a subfield of $\calO_w$.
We know that $C^* \subseteq \calO_u^*$, and since $\pi_v$ is an isomorphism on $C$,
we also have $C^* \subseteq \pi_v\inv(\calO_u^*) = \calO_w^*$.

\autosyncline{1637}The residue field of $w$ is $C$, so $C$ must be a maximal subfield of $\calO_w$.
We have $\cdim_2(C) = \cdim_2(C) = 0$,
so we can apply Main Theorem~\ref{main-valued2} with the valuation $w$.
\end{proof}

\bibliographystyle{amsalpha}
\bibliography{all}

\end{document}